\theoremstyle{plain}
\newtheorem{theorem}{Theorem}
\newtheorem{conj}[theorem]{Conjecture}
\newtheorem{prop}[theorem]{Proposition}
\newtheorem{lemma}[theorem]{Lemma}
\newtheorem{cor}[theorem]{Corollary}
\newtheorem*{remark}{Remark}
\newtheorem*{example}{Example}
\newtheorem*{lemma*}{Lemma}
\newtheorem*{claim}{Claim}
\newtheorem{definition}[theorem]{Definition}
\begin{document}
\title{On the Schaper Numbers of Partitions}
\author{Liam Jolliffe \and Stuart Martin}
\date{\today}
\maketitle
\begin{abstract}
One of the most useful tools for calculating the decomposition numbers of the symmetric group is Schaper's sum formula \cite{Schaper}. The utility of this formula for a given Specht module can be improved by knowing the Schaper Number of the corresponding partition. Fayers \cite{Fayers} gives a characterisation of those partitions whose Schaper number is at least two. In this paper we shall demonstrate how this knowledge can be used to calculate some decomposition numbers before extending this result with the hope of allowing more decomposition numbers to be calculated in the future. For $p=2$ we shall give a complete characterisation of partitions whose Schaper number is at least three, and those whose Schaper number at least four. We also present a list of necessary conditions for a partition to have Schaper number at least three for odd primes and a conjecture on the sufficiency of these conditions.
\end{abstract}
\newpage
\section{Introduction}

Our notation is taken from James' book \cite{James}, to which we refer the reader for more information on the representatiton theory of the symmetric group, and from Fayers \cite{Fayers}. The theory of Scaper layers was introduced in Schaper's thesis \cite{Schaper}, but is best found in English in \cite{Donkin}, \cite{qSchaper} and \cite{Kunzer_Nebe}.  Recall, given $\lambda\vdash n$, we define a $\lambda$-\emph{tableau} to be a bijection between $[\lambda]$, the \emph{Young diagram} of shape $\lambda$, and $[n]$. Two tableaux $s$ and $t$ are \emph{row equivalent}, $s\sim_{row}t$, if the entries in each row of $s$ are the same as the entries in the corresponding rows of $t$. \emph{Column equivalence}, $s\sim_{col}t$, is definied similarly and the column stabiliser of $t$ is defined to be the set $C(t)=\lbrace \sigma \in \mathcal{S}_n \mid\sigma t \sim_{col} t \rbrace$. A $\lambda$-\emph{tabloid} is a row equivalence class of $\lambda$-tableaux, and will be denoted by writing the tableau in braces, $\lbrace s \rbrace$, or by drawing the Young diagram without vertical lines separating boxes. For a ring $R$, the $R$ span of all $\lambda$ tabloids is the \emph{Young module}, $M_R^\lambda$. We have an inner product on this space by linearly extending the following: 
$$\langle \lbrace s \rbrace , \lbrace t \rbrace \rangle = 
\begin{cases}
1 & \text{if } s\sim_{row}t\\
0 & \text{otherwise}
\end{cases}.$$ 
We define the \emph{column symmetriser} of $t$ to be the element of the group algebra, $R\mathcal{S}_n$, given by 
$$\kappa_t =\sum_{\sigma\in C(t)}(-1)^\sigma \sigma,$$ and define the \emph{polytabloid} $e_t=\kappa_t \lbrace t\rbrace$. The \emph{Specht module} $S_R^\lambda\subseteq M_R^\lambda$ is the $R$ span of polytabloids; In fact $S_R^\lambda=\langle e_t \mid t\in \text{std}(\lambda)\rangle_R$, where $\text{std}(\lambda)$ is the set of all \emph{standard} $\lambda$-tableaux, that is tableaux whose entries are increasing across rows and down columns. The Specht modules are a complete set of non-isomorphic irreducible $\mathbb{C}\mathcal{S}_n$ modules, but over fields of positive characteristic they are not necessarily irreducible. In this case the irreducible modules are the modules $D^\lambda=\frac{S^\lambda}{S^\lambda\cap (S^\lambda)^\perp}$ where $\lambda$ is $p$-regular, that is does not have $p$ consecutive rows of the same length (a $p$-\emph{singularity}), and orthogonality is with respect to the inner product. An important problem in the representation theory of the symmetric group is calculation of the composition multiplicity, $[S^\lambda : D^\mu]$, of the irreducible module $D^\mu$ in the Specht module $S^\lambda$.  

For a given Specht module $S^\lambda_\mathbb{Z}$ and prime $p$ we define the submodule $$S^\lambda_i=\{x\in S^\lambda_\mathbb{Z} : p^i \mid \langle x,y \rangle \forall y \in S^\lambda_\mathbb{Z}  \}$$
and denote by $\Bar{S^\lambda_i}$ its reduction mod $p$ to obtain the \emph{Schaper filtration}: 
$$S^\lambda_{\mathbb{F}_p}=\Bar{S^\lambda_0}\geq \Bar{S^\lambda_1}\geq \Bar{S^\lambda_2}\geq \cdots.$$
All composition factors of $S^\lambda_{\mathbb{F}_p}$ must all appear in the quotients of this filtration and hence studying this filtration would reveal the decomposition numbers for the symmetric groups. Unfortunately, the layers of this filtration are not known in general, but despite this we are able to use combinatorial tools to calculate an upper bound for the decomposition number $[S^\lambda : D^\mu]$. 

Let $\lambda\vdash n$ and define $H(\lambda)$ be the set of triples $(g,h,\nu)$ where $\nu\unlhd\lambda$ is a partition of $n$ and $g$ and $h$ are hooks of $Y(\lambda)$ and $Y(\nu)$ respectively, such that removing the corresponding rim-hooks leaves the same partition $Y(\lambda\backslash g)=Y(\nu \backslash h)$.  

\begin{theorem}[Schaper's Sum Formula]\label{Schaper's sum formula}

Let $\mu$ be a $p$-regular partition not equal to $\lambda$, then $$\sum_{i\ge 1}[\Bar{S^\lambda_{i}} : D^\mu]=\sum_{(g,h,\nu)\in H(\lambda)} \alpha_{(g,h,\nu)} [S^\nu : D^\mu],$$ where the coefficients are given by $\alpha_{(g,h,\nu)}=\nu_p(\mid g \mid)(-1)^{l(g)+l(h)+1}$.
\end{theorem}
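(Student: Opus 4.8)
The plan is to identify the Schaper filtration with the Jantzen filtration of the integral Specht module and then invoke Jantzen's sum formula, thereby reducing the theorem to a combinatorial evaluation of the relevant Gram determinant in terms of rim hooks. First I would work over the localisation $\mathcal{O}=\mathbb{Z}_{(p)}$, regard $S^\lambda_\mathcal{O}$ as a free $\mathcal{O}$-lattice carrying the non-degenerate symmetric bilinear form restricted from $M^\lambda_\mathcal{O}$, and observe that the submodules $S^\lambda_i$ are exactly the terms of the Jantzen filtration attached to this form. The quotients $\bar{S^\lambda_i}$ are then their reductions mod $p$, so the left-hand side $\sum_{i\ge 1}[\bar{S^\lambda_i}:D^\mu]$ is precisely the quantity controlled by Jantzen's machinery.

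Next I would apply the Grothendieck-group form of Jantzen's sum formula. The elementary ``single lattice'' version yields only $\sum_{i\ge1}\dim_{\mathbb{F}_p}\bar{S^\lambda_i}=\nu_p(\det G)$, where $G$ is the Gram matrix of the form in a lattice basis of standard polytabloids; to obtain the refined statement with multiplicities $[S^\nu:D^\mu]$ one must track composition factors. I would do this by placing the form in a flat family, varying the defining parameter as in Jantzen's original argument, so that the sum of the filtration layers becomes an alternating sum of classes $[S^\nu]$ weighted by the $p$-adic valuations of the sub-determinants. Equivalently one may run the argument on the Weyl modules of the Schur algebra $S(n,n)$ and then apply the Schur functor, since the contravariant form on $S^\lambda$ is the restriction of the contravariant form on the corresponding Weyl module; the algebraic-group Jantzen sum formula then transports directly to the symmetric group.

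The heart of the proof is showing that these weighted determinant contributions are indexed exactly by the triples $(g,h,\nu)\in H(\lambda)$. Here I would use the known factorisation of the Specht Gram determinant into hook-length data: each wall-crossing, or elementary divisor, corresponds to a pair of hooks $g$ of $Y(\lambda)$ and $h$ of $Y(\nu)$ with $Y(\lambda\setminus g)=Y(\nu\setminus h)$, so that $\nu$ is obtained from $\lambda$ by stripping a rim hook and re-attaching one of the same size further down, which forces $\nu\unlhd\lambda$. The $p$-adic valuation $\nu_p(\lvert g\rvert)$ of the hook length is precisely the weight Jantzen attaches to that crossing, producing the factor $\nu_p(\lvert g\rvert)$ in $\alpha_{(g,h,\nu)}$.

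Finally the sign $(-1)^{l(g)+l(h)+1}$ must be extracted, and this is the step I expect to be the main obstacle. When a reflected weight is straightened back to a dominant partition $\nu$, the resulting sign is governed by the leg lengths of the rim hooks, exactly as in the Murnaghan--Nakayama rule, where removing a rim hook of leg length $l$ contributes $(-1)^{l}$; removing $g$ and $h$ therefore produces $(-1)^{l(g)+l(h)}$, and the extra $-1$ records that the contribution enters with the opposite orientation to the dominant term in Jantzen's alternating sum. Carefully reconciling these sign conventions with the straightening of non-standard tableaux, and verifying that no contributions are double-counted or silently cancel, is where the bulk of the bookkeeping lies; once this is done, collecting terms gives exactly $\sum_{(g,h,\nu)\in H(\lambda)}\alpha_{(g,h,\nu)}[S^\nu:D^\mu]$.
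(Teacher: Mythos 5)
This theorem is not proved in the paper at all: it is quoted as a known result from Schaper's thesis \cite{Schaper}, with the English-language treatments in \cite{Donkin}, \cite{qSchaper} and \cite{Kunzer_Nebe} cited in the introduction, so there is no in-paper argument to compare yours against. Judged on its own terms, your outline follows exactly the route those references take: identify the Schaper filtration of $S^\lambda_{\mathbb{Z}_{(p)}}$ with the Jantzen filtration of the lattice with its bilinear form, pass to the Weyl module of the Schur algebra (or to a one-parameter family as in \cite{qSchaper}) so that the Grothendieck-group version of Jantzen's sum formula applies, transport back through the Schur functor, and then convert the alternating sum over reflected weights into a sum over pairs of rim hooks with signs given by leg lengths. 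That is the correct strategy and your identification of where the difficulty sits is accurate.

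However, as a proof it is only a roadmap. The two steps you flag as ``the heart'' and ``the main obstacle'' --- showing that the nonzero Jantzen contributions are indexed precisely by the triples $(g,h,\nu)\in H(\lambda)$ with weight $\nu_p(\lvert g\rvert)$, and that the straightening signs come out as $(-1)^{l(g)+l(h)+1}$ with no cancellation or double counting --- are exactly the content of the theorem, and you defer both. In particular you would need to make precise why each wall-crossing of the form contributes the valuation of a single hook length (rather than, say, a product of hook lengths from the Gram determinant factorisation, which is the more naive guess), and to verify that a reflected weight straightens to a dominant $\nu$ obtained from $\lambda$ by a rim-hook move of the same total size, with the stated dominance condition $\nu\unlhd\lambda$. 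Until that bookkeeping is carried out the argument establishes only the shape of the formula, not the formula itself; for the purposes of this paper the appropriate move is simply to cite \cite{Schaper} or \cite{qSchaper}, as the authors do.
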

The top factor of this filtration $\Bar{S^\lambda_0} / \Bar{S^\lambda_1}$ is $D^\lambda$ if the partition $\lambda$ is $p$-regular, otherwise is zero and hence the sum formula gives an upper bound on $[S^\lambda : D^\mu]$ for $\mu\lhd \lambda$, as any composition factor isomorphic to $ D^\mu$ must appear in a quotient further along the filtration.  
\begin{definition}
The $i$th \emph{Schaper layer} of the Specht module $S^\lambda$ is $$L_i=\Bar{S^\lambda_i} / \Bar{S^\lambda_{i+1}}.$$ We say that the $k$th Schaper layer of $S^\lambda$ is the \emph{top layer} if $k$ is the least such integer such that $L_k\ne 0$. The integer $k$ will be denoted by $\nu_p(\lambda)$ and shall be called the ($p$) \emph{Schaper number} of $\lambda$. 
\end{definition}
An irreducible module appearing in the $i$th layer is counted by the formula $i$ times as it is a composition factor of $\Bar{S^\lambda_j}$ for all $j\leq i$, and hence knowing which layer is the top layer allows us to improve the upper bound for the decomposition numbers obtained from Schaper's sum formula.

\begin{cor}\label{Sum formula cor}
$$[S^\lambda:D^\mu]\le \frac{\sum_{i\ge 1}[\Bar{S^\lambda_{i}} : D^\mu]}{\nu_p(\lambda)}.$$
\end{cor}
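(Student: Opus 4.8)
The plan is to convert the observation recorded just before the corollary—that a composition factor lying in the $i$th layer $L_i$ is counted $i$ times by the left-hand side of Schaper's sum formula—into the stated inequality, by bounding each such count below by $k:=\nu_p(\lambda)$. The quotient on the right is defined only when $k\ge 1$, so I work in that regime; there $L_0=0$ (since $k$ is the least index with $L_k\neq 0$), so every copy of $D^\mu$ already occurs in a layer of index at least $k$.

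The first step is the bookkeeping identity furnished by the filtration. Since $\Bar{S^\lambda_j}\ge\Bar{S^\lambda_{j+1}}\ge\cdots$ has successive quotients $L_j,L_{j+1},\dots$ and the filtration terminates ($S^\lambda_{\mathbb{F}_p}$ is finite-dimensional, so $\Bar{S^\lambda_i}=0$ for $i\gg 0$), the Jordan--Hölder theorem yields
$$[\Bar{S^\lambda_j}:D^\mu]=\sum_{i\ge j}[L_i:D^\mu]\qquad(j\ge 0).$$

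Summing over $j\ge 1$ and interchanging the order of summation—so that $[L_i:D^\mu]$ is counted once for each $j$ with $1\le j\le i$—I would then obtain
$$\sum_{j\ge 1}[\Bar{S^\lambda_j}:D^\mu]=\sum_{i\ge 1}i\,[L_i:D^\mu]=\sum_{i\ge k}i\,[L_i:D^\mu]\ge k\sum_{i\ge k}[L_i:D^\mu]=k\,[S^\lambda:D^\mu],$$
where I use $L_i=0$ for $0\le i<k$ together with $i\ge k$ on the surviving terms, and $[S^\lambda:D^\mu]=\sum_{i\ge 0}[L_i:D^\mu]=\sum_{i\ge k}[L_i:D^\mu]$. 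Dividing through by $k=\nu_p(\lambda)$ gives the corollary.

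Being a reindexing together with Jordan--Hölder additivity, the argument presents no serious obstacle; the only points requiring care are the justification that composition multiplicities add along the Schaper filtration (which rests on the filtration being finite) and the standing hypothesis $\nu_p(\lambda)\ge 1$, which both makes the division meaningful and ensures, via $L_0=0$, that the whole module is captured by the layers of index at least $k$. This is exactly consistent with Schaper's sum formula being stated for $\mu\neq\lambda$.
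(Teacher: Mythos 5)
Your argument is correct and is exactly the paper's (the paper leaves the corollary unproved beyond the remark that a factor in the $i$th layer is counted $i$ times by the sum formula, which is precisely the counting identity you formalise via Jordan--H\"older additivity along the filtration). Your explicit attention to the standing hypothesis $\nu_p(\lambda)\ge 1$ and to $L_i=0$ for $i<\nu_p(\lambda)$ is a welcome tightening of what the paper takes for granted.
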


Fayers \cite{Fayers} showed that Schaper numbers of partitions are superadditive in the following sense: 
\begin{prop}\label{(lambda*mu)}
Let $\lambda = (\lambda_1,\dots,\lambda_r) \vdash n$ and $\mu=(\mu_1,\dots,\mu_s) \vdash m$. Then $\nu_p(\lambda\star\mu)\ge \nu_p(\lambda)+\nu_p(\mu)$, where $\lambda\star\mu$ is the partition of $n+m$ which has rows of lengths $\lambda_1,\dots,\lambda_r,\mu_1,\dots,\mu_{s-1}$ and $\mu_s$.
\end{prop}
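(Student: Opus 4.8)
The plan is to translate the statement about Schaper layers into a divisibility statement about the bilinear form, and then to control that form by splitting the underlying set into the rows coming from $\lambda$ and the rows coming from $\mu$.

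\emph{Step 1 (reformulation).} First I would record the elementary-divisor description of the Schaper number. Fixing a $\mathbb{Z}$-basis of $S^\lambda_\mathbb{Z}$ and putting its Gram matrix into Smith normal form $\mathrm{diag}(p^{a_1},\dots,p^{a_d})$, where the $a_i$ are the $p$-adic valuations of the elementary divisors, the definition of the lattices $S^\lambda_i$ gives $\dim \bar S^\lambda_i=\#\{k:a_k\ge i\}$, so $L_i\ne 0$ precisely when some $a_k$ equals $i$. Hence $\nu_p(\lambda)=\min_k a_k$, which is the $p$-adic valuation of the gcd of all entries of the Gram matrix; equivalently $p^{\nu_p(\lambda)}\mid\langle x,y\rangle$ for every $x,y\in S^\lambda_\mathbb{Z}$, and $\nu_p(\lambda)$ is the largest exponent with this property. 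Thus it suffices to prove that $p^{\nu_p(\lambda)+\nu_p(\mu)}$ divides $\langle e_s,e_t\rangle$ for all tableaux $s,t$ of shape $\lambda\star\mu$.

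\emph{Step 2 (the alphabet split).} Write $n=|\lambda|$, $m=|\mu|$, and let $U\subseteq M^{\lambda\star\mu}_\mathbb{Z}$ be the span of those tabloids whose first $r$ rows contain exactly $\{1,\dots,n\}$; as a module for the Young subgroup $\mathcal{S}_n\times\mathcal{S}_m$ one has $U\cong M^\lambda_\mathbb{Z}\otimes M^\mu_\mathbb{Z}$, on the tabloid basis the form restricts to the tensor product of the two Young-module forms, and $U$ is orthogonal to its complement. Using $\mathcal{S}_{n+m}$-invariance of $\langle\,,\rangle$ (replacing $(s,t)$ by $(gs,gt)$) I may assume $t$ is split as $t=t_1\mid t_2$ with $t_1$ filling the top $r$ rows. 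Decomposing the column group $C(t)$ into cosets of $C(t_1)\times C(t_2)$ indexed by the column ``shuffles'' $\mathbf w$ yields a factorisation $\kappa_t=D_t\,\kappa_{t_1}\kappa_{t_2}$ with $D_t=\sum_{\mathbf w}(-1)^{\mathbf w}\mathbf w$, whence $e_t=D_t\,\eta$ where $\eta=\kappa_{t_1}\kappa_{t_2}\{t\}$ corresponds to $e_{t_1}\otimes e_{t_2}\in S^\lambda_\mathbb{Z}\otimes S^\mu_\mathbb{Z}$. Moving $D_t$ across the form and projecting onto $U$ (which is legitimate since $\eta\in U$) gives $\langle e_s,e_t\rangle=\langle \mathrm{proj}_U(D_t^\ast e_s),\,\eta\rangle_U$, where $D_t^\ast e_s$ still lies in $S^{\lambda\star\mu}_\mathbb{Z}$.

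\emph{Step 3 (conclusion and the main obstacle).} The crux is the claim that $\mathrm{proj}_U(S^{\lambda\star\mu}_\mathbb{Z})\subseteq S^\lambda_\mathbb{Z}\otimes S^\mu_\mathbb{Z}$, that is, projecting a polytabloid of shape $\lambda\star\mu$ along the alphabet split always lands in the tensor product of the two Specht lattices rather than merely in $M^\lambda_\mathbb{Z}\otimes M^\mu_\mathbb{Z}$. Granting this, we may write $\mathrm{proj}_U(D_t^\ast e_s)=\sum_i a_i\otimes b_i$ with $a_i\in S^\lambda_\mathbb{Z}$ and $b_i\in S^\mu_\mathbb{Z}$, so that
\[
\langle e_s,e_t\rangle=\sum_i\langle a_i,e_{t_1}\rangle\,\langle b_i,e_{t_2}\rangle,
\]
and each factor is divisible by $p^{\nu_p(\lambda)}$, respectively $p^{\nu_p(\mu)}$, by Step 1; hence every summand, and therefore $\langle e_s,e_t\rangle$, is divisible by $p^{\nu_p(\lambda)+\nu_p(\mu)}$, which is exactly what is required. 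I expect the containment $\mathrm{proj}_U(S^{\lambda\star\mu}_\mathbb{Z})\subseteq S^\lambda_\mathbb{Z}\otimes S^\mu_\mathbb{Z}$ to be the real work: the cleanest route is probably James' kernel-intersection characterisation of the Specht module, checking that the box-moving homomorphisms cutting out $S^\lambda_\mathbb{Z}\otimes S^\mu_\mathbb{Z}$ inside $M^\lambda_\mathbb{Z}\otimes M^\mu_\mathbb{Z}$ pull back, through $\mathrm{proj}_U$, to homomorphisms annihilating $S^{\lambda\star\mu}_\mathbb{Z}$; alternatively one can verify it directly on polytabloids using Garnir relations.
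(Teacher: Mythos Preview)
The paper does not supply a proof of this proposition: it is quoted from Fayers \cite{Fayers} (see the sentence immediately preceding the statement), so there is no in-paper argument to compare yours against.

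Your outline is nonetheless sound. Step~1 is the standard elementary-divisor reformulation, and the coset factorisation of $\kappa_t$ in Step~2 is correct. The containment $\mathrm{proj}_U(S^{\lambda\star\mu}_\mathbb{Z})\subseteq S^\lambda_\mathbb{Z}\otimes S^\mu_\mathbb{Z}$ that you isolate as the crux is true, and the kernel-intersection route you propose does go through: for a James homomorphism $\psi_{i,d}$ with $i\ne r$ (that is, not straddling the seam between the $\lambda$-block and the $\mu$-block of rows), moving entries between rows $i$ and $i+1$ preserves the condition that the top $r$ rows contain exactly $\{1,\dots,n\}$, so $\psi_{i,d}$ respects the decomposition of $M^{\lambda\star\mu}$ into $U$ and the span of the remaining tabloids, and therefore commutes with $\mathrm{proj}_U$. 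It follows that $\mathrm{proj}_U(S^{\lambda\star\mu})$ lies in $\bigcap_{i\ne r,\,d}\ker(\psi_{i,d}|_U)=S^\lambda\otimes S^\mu$. With that lemma established your argument is complete; as written it is still a sketch, and you should spell this step out before calling it a proof.
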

This result is reminiscent of Donkin's generalisation \cite{Donkin} of the principle of row removal \cite{Jamesrowremoval} and is useful in determining lower bounds on the Schaper number of a partition.

\section{The Schaper Number of $\lambda$}

In this section we shall turn to characterising partitions $\lambda$ with a certain Schaper number. We use a number of results and techniques due to Fayers \cite{Fayers}, which are stated here.
A corollary of the following theorem of James \cite{James} tells us that $\nu_p(\lambda)\ge 1$ if and only if $\lambda$ is $p$-singular:
\begin{theorem}\label{James}
Suppose $\lambda$ has $z_j$ parts equal to $j$ for each $j$. Then 
$$\nu_p(\prod^{\infty}_{1}z_j!)\le \nu_p(\lambda) \le \nu_p(\prod^{\infty}_{1}(z_j!)^j).$$
\end{theorem}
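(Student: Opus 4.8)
The plan is to translate the Schaper number into a divisibility statement about the Gram matrix of the bilinear form and then bound the relevant $p$-adic valuation from both sides. First I would fix the $\mathbb{Z}$-basis of $S^\lambda_{\mathbb{Z}}$ given by the standard polytabloids and let $G=\left(\langle e_s,e_t\rangle\right)_{s,t}$ be its Gram matrix. Writing $G=UDV$ with $U,V$ unimodular and $D$ in Smith normal form with elementary divisors $d_k$, the same congruence that defines $S^\lambda_i$ shows that, in suitable coordinates, $S^\lambda_i$ is cut out by the conditions $p^{\max(0,\,i-\nu_p(d_k))}\mid b_k$; reducing mod $p$ gives $\dim_{\mathbb{F}_p}\bar{S^\lambda_i}=\#\{k:\nu_p(d_k)\ge i\}$, whence $\dim L_i=\#\{k:\nu_p(d_k)=i\}$. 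Thus the least $i$ with $L_i\ne 0$ is the least valuation of an elementary divisor, i.e.\ the valuation of $d_1=\gcd_{s,t}\langle e_s,e_t\rangle$, giving the working identity
$$\nu_p(\lambda)=\min_{s,t}\nu_p\!\left(\langle e_s,e_t\rangle\right),$$
so that everything reduces to estimating the $p$-part of the inner products of polytabloids.

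For the lower bound I would use the superadditivity of \cref{(lambda*mu)}. Since $\lambda=(r^{z_r})\star\big((r-1)^{z_{r-1}}\big)\star\cdots\star(1^{z_1})$ is the concatenation of its maximal rectangular blocks, iterating \cref{(lambda*mu)} gives $\nu_p(\lambda)\ge\sum_j\nu_p\!\left((j^{z_j})\right)$, so it suffices to prove the single-rectangle estimate $\nu_p((j^{z}))\ge\nu_p(z!)$. For a rectangle I would establish the stronger fact that $z!$ divides every inner product in $S^{(j^z)}$: expanding $\langle e_s,e_t\rangle=\sum_{\sigma\in C(s)}(-1)^\sigma\langle\{\sigma s\},e_t\rangle$ and exploiting the action of $S_z$ permuting the $z$ equal rows (which commutes with $\mathcal{S}_n$ and preserves the form), the contributing terms organise into $S_z$-orbits whose signed contributions are divisible by $z!$.

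Because $\nu_p(\lambda)$ is the \emph{minimum} over all pairs, for the upper bound it is enough to exhibit a single pair $(s,t)$ with $\nu_p\langle e_s,e_t\rangle\le\nu_p\!\left(\prod_j (z_j!)^j\right)$. The guiding identity is
$$\prod_j (z_j!)^j=\prod_{c\ge 1}\ \prod_{j\ge c} z_j!,$$
together with the observation that the diagonal value $\langle e_t,e_t\rangle=|C(t)|=\prod_{c}\big(\textstyle\sum_{j\ge c}z_j\big)!$ overshoots this product precisely by the multinomial coefficients $\binom{\lambda'_c}{\dots}$ attached to each column. I would therefore choose $s$ and $t$, built block by block from the rectangles, so that within each column $c$ the only permutations $\sigma\in C(s)$ for which $\{\sigma s\}$ survives in $e_t$ are those preserving the partition of that column into its equal-length-row blocks; the signed count then collapses from $(\sum_{j\ge c}z_j)!$ to $\prod_{j\ge c}z_j!$ in each column, yielding $\langle e_s,e_t\rangle=\pm\prod_j(z_j!)^j$.

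The reduction and the lower bound are essentially bookkeeping once the Smith-normal-form interpretation and \cref{(lambda*mu)} are in hand; the real work is the upper bound. The hard part will be constructing the pair $(s,t)$ explicitly and proving that every column permutation mixing entries that come from rows of \emph{different} lengths produces a tabloid with two entries of a common column of $t$ lying in the same row, so that it contributes $0$. Verifying this vanishing simultaneously across all columns — where the columns of $\lambda$ cut through several rectangular blocks at once — is the delicate point, and it is exactly here that the asymmetry between the two bounds enters, namely the weighting factor $j$, i.e.\ the product over all columns in the upper bound against the single first-column contribution in the lower bound.
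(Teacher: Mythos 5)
The paper does not actually prove \Cref{James}: it is quoted from James's book, so there is no internal proof to compare you against, and I assess your argument on its own terms. Your opening reduction is correct: the Smith-normal-form computation does show that the first non-zero Schaper layer occurs at $\nu_p(d_1)=\nu_p\bigl(\gcd_{s,t}\langle e_s,e_t\rangle\bigr)$, and this is exactly the working definition the paper uses implicitly throughout. The lower bound is also essentially sound: concatenating the maximal rectangles and iterating \Cref{(lambda*mu)} reduces it to showing $z!\mid\langle e_s,e_t\rangle$ for all $(j^z)$-polytabloids, and this holds because the group $\mathcal{S}_z$ permuting the rows of a rectangular tabloid acts freely on the set of tabloids common to $e_s$ and $e_t$ and preserves the product of signs (a row permutation $\rho$ is realised on each side by a column permutation of sign $(\mathrm{sgn}\,\rho)^j$, so the two contributions cancel). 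This is the same mechanism as the ``permute all the colours'' action on admissible colourings used in the paper's proof of \Cref{base case}. You should, however, say this explicitly: as written, ``the contributing terms organise into $\mathcal{S}_z$-orbits'' neither identifies the action on pairs $(\sigma,\tau)\in C(s)\times C(t)$ nor checks that it is free and sign-preserving, and sign-preservation is precisely where rectangularity is used.

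The genuine gap is the upper bound, which is the substantive half of the theorem. You correctly observe that the diagonal entry $\langle e_t,e_t\rangle=\prod_c\lambda'_c!$ overshoots and that the right target is $\prod_c\prod_{j\ge c}z_j!=\prod_j(z_j!)^j$, but the pair $(s,t)$ is never constructed and the claimed vanishing of every ``block-mixing'' column permutation is never verified; you flag this yourself as ``the delicate point,'' and until it is carried out there is no proof that $\nu_p(\lambda)\le\nu_p\bigl(\prod_j(z_j!)^j\bigr)$. Two remarks: first, you only need the valuation inequality $\nu_p(\langle e_s,e_t\rangle)\le\nu_p\bigl(\prod_j(z_j!)^j\bigr)$ for one pair, not the exact value $\pm\prod_j(z_j!)^j$, which gives you more room; second, the computations in the paper's proof of \Cref{onlyif} (taking $t$ to be the initial tableau, $u$ a suitable row-rearrangement, and factoring $\langle e_t,e_u\rangle$ as a product over blocks of consecutive equal-length rows) are exactly the kind of explicit verification your plan calls for, which makes the construction plausible. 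But plausibility is not a proof: the assertion that the vanishing can be arranged simultaneously across all columns of an arbitrary $\lambda$, where a single column cuts through several rectangular blocks, is precisely the statement that must be established, and it is absent.
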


We shall use the graph-theoretic approach introduced by Fayers \cite{Fayers}. 
Recall if $s$ and $t$ are row equivalent $\lambda$-tableaux we define the graph $G=G(s,t)$
as follows: the vertex set of $G$ is $ \lbrace s_1,s_2,\dots,s_{\lambda'_1}, t_1,t_2,\dots,t_{\lambda'_1} \rbrace $ and the edge set is $\lbrace e_1, \dots, e_n \rbrace$ and the edge $e_l$ goes from $s_i$ to $t_j$ if $l$ appears in column $i$ of $s$ and column $j$ of $t$. We consider colourings of $G(s,t)$ with colours $c_1,\dots,c_{\lambda'_1}$ and we call such a colouring admissible if for each $l$ there is precisely one edge of colour $c_l$ incident on each of the vertices $s_1,\dots,s_{\lambda'_l},t_1,\dots,t_{\lambda'_l}$. The set of all admissible colourings of $G$ will be denoted $A(G)$. Observe there is a bijection between the admissible colourings of $G$ and pairs $(u,v)$ of $\lambda$-tableaux with $s\sim_{\text{col}}u\sim_{\text{row}}\sim_{\text{col}}t$. This correspondence is given by colouring the edge $e_i$ with colour $i$ if it appears in row $i$ of $u$, or equivalently row $i$ of $v$. Observe each admissible colouring induces a permutation of $\lbrace 1,2,\dots,\lambda'_l\rbrace$ for each $l$ by sending $i$ to $j$ if there is an edge from $s_i$ to $t_j$ of colour $l$. If $u$ and $v$ are the corresponding tableaux then this permutation which takes the $l$th row of $u$ to th $l$th row of $v$. Define the product of all of the signatures of these permutations for all $l$ to be the signature of the colouring, $(-1)^C$, and observe that as $(-1)^C=(-1)^{\pi_{uv}}=(-1)^{\pi_{st}}(-1)^{\pi_{us}}(-1)^{\pi_{tv}}$ we get the following result \cite{Fayers}:
\begin{prop}
$$\sum_{C\in A(G)}(-1)^C=(-1)^{\pi_{st}} \langle e_s , e_t\rangle .$$
\end{prop}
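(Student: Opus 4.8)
The plan is to expand $\langle e_s,e_t\rangle$ straight from the definitions of the column symmetriser and the inner product, reindex the surviving terms as admissible colourings, and then match signs using the observation recorded just above the statement.

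First I would use $e_s=\kappa_s\{s\}=\sum_{\sigma\in C(s)}(-1)^\sigma\{\sigma s\}$, and likewise $e_t=\sum_{\tau\in C(t)}(-1)^\tau\{\tau t\}$, so that bilinearity of the inner product gives
$$\langle e_s,e_t\rangle=\sum_{\sigma\in C(s)}\sum_{\tau\in C(t)}(-1)^\sigma(-1)^\tau\,\langle\{\sigma s\},\{\tau t\}\rangle.$$
Since $\langle\{\sigma s\},\{\tau t\}\rangle$ equals $1$ when $\sigma s\sim_{row}\tau t$ and $0$ otherwise, only those pairs $(\sigma,\tau)$ with $\sigma s\sim_{row}\tau t$ survive.

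Next I would reparametrise these surviving terms. Setting $u=\sigma s$ and $v=\tau t$ we have $s\sim_{col}u$ and $t\sim_{col}v$ automatically, and the surviving condition becomes $u\sim_{row}v$; hence the nonzero terms are indexed exactly by the pairs $(u,v)$ with $s\sim_{col}u\sim_{row}v\sim_{col}t$. These are precisely the pairs placed in bijection with $A(G)$ above, so the double sum is reindexed over $C\in A(G)$.

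The remaining step, which I expect to be where all the care is needed, is the sign bookkeeping. For the pair $(u,v)$ attached to a colouring $C$ I would identify $(-1)^\sigma$ with $(-1)^{\pi_{us}}$ and $(-1)^\tau$ with $(-1)^{\pi_{tv}}$ (the signs of the column permutations relating $s$ to $u$ and $t$ to $v$), and recall that the colouring signature is $(-1)^C=(-1)^{\pi_{uv}}$. The identity $(-1)^{\pi_{uv}}=(-1)^{\pi_{st}}(-1)^{\pi_{us}}(-1)^{\pi_{tv}}$ noted above then yields $(-1)^\sigma(-1)^\tau=(-1)^{\pi_{st}}(-1)^C$ for each surviving term, whence $\langle e_s,e_t\rangle=(-1)^{\pi_{st}}\sum_{C\in A(G)}(-1)^C$. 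Multiplying through by $(-1)^{\pi_{st}}$ and using $(-1)^{2\pi_{st}}=1$ gives the claimed formula. The genuinely delicate point is the sign identity itself: one must check that the per-colour permutations of column indices compose to give the row permutation $\pi_{uv}$, and that $\pi_{uv}$, $\pi_{us}$, $\pi_{tv}$ and the fixed row permutation $\pi_{st}$ satisfy the stated relation on signs, so that the column contributions $\sigma,\tau$ and the constant $\pi_{st}$ can be cleanly separated.
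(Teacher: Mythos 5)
Your proposal is correct and follows the same route the paper (after Fayers) takes: expand $\langle e_s,e_t\rangle$ bilinearly over the column stabilisers, identify the surviving terms with the pairs $(u,v)$ satisfying $s\sim_{col}u\sim_{row}v\sim_{col}t$ (hence with $A(G)$), and absorb the signs via $(-1)^C=(-1)^{\pi_{uv}}=(-1)^{\pi_{st}}(-1)^{\pi_{us}}(-1)^{\pi_{tv}}$. The paper records only this sign identity and treats the rest as immediate, so your write-up is just a more explicit version of the same argument.
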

Fayers uses this approach to prove a result reminiscent of principle of column removal \cite{Jamesrowremoval}:
\begin{prop}\label{columnremoval}
Let $\hat{\lambda}$ be the partition whose Young diagram is obtained by removing the first column of the Young diagram for $\lambda$. Then $\nu_p(\lambda) \ge \nu_p(\hat{\lambda})$. 
\end{prop}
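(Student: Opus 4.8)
The plan is to work throughout with the reformulation of the Schaper number as a minimal $p$-adic valuation in the Gram matrix. Write $B$ for the Gram matrix of $\langle\cdot,\cdot\rangle$ on $S^\lambda_{\mathbb{Z}}$ in the basis of standard polytabloids. Then $S^\lambda_i$ consists of those $x$ for which every entry of $Bx$ is divisible by $p^i$, so reducing $B$ to Smith normal form shows that $L_i\ne 0$ exactly when some elementary divisor of $B$ has valuation $i$. Hence $\nu_p(\lambda)$, the least $i$ with $L_i\ne 0$, is the minimal valuation of an elementary divisor, which is $\nu_p$ of the greatest common divisor of the entries of $B$; that is, $\nu_p(\lambda)=\min_{s,t}\nu_p\langle e_s,e_t\rangle$, the minimum ranging over all pairs of $\lambda$-tableaux. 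Since the analogous statement holds for $\hat\lambda$, it suffices to prove that $p^{\nu_p(\hat\lambda)}$ divides $\langle e_s,e_t\rangle$ for \emph{every} pair of $\lambda$-tableaux $s,t$: this forces $\nu_p(\lambda)\ge\nu_p(\hat\lambda)$.

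Set $r=\lambda'_1$, the length of the first column. The mechanism is cleanest when $s$ and $t$ have identical first columns, which after relabelling we take to be $(1,\dots,r)$ read downwards. Here the column stabiliser factorises as $C(s)=\Sigma\times C(\hat s)$, with $\Sigma$ the symmetric group on the first column and $C(\hat s)$ acting on the remaining columns, and likewise for $t$. Expanding $\langle e_s,e_t\rangle=\sum_{\pi\in C(s),\,\rho\in C(t)}(-1)^{\pi}(-1)^{\rho}[\pi s\sim_{row}\rho t]$ and splitting $\pi=\pi_1\pi'$, $\rho=\rho_1\rho'$ into first-column and remaining-column parts, the row-equivalence condition decouples: the entries $1,\dots,r$ lie in the first column on both sides, forcing $\pi_1=\rho_1$, while the rest must satisfy $\pi'\hat s\sim_{row}\rho'\hat t$. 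Summing the sign $(-1)^{\pi_1}(-1)^{\rho_1}=1$ over the $r!$ common choices gives the clean identity $\langle e_s,e_t\rangle=r!\,\langle e_{\hat s},e_{\hat t}\rangle$, so that $\nu_p\langle e_s,e_t\rangle\ge\nu_p\langle e_{\hat s},e_{\hat t}\rangle\ge\nu_p(\hat\lambda)$.

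The main obstacle is that the minimum defining $\nu_p(\lambda)$ is \emph{not} in general attained on such aligned pairs: already for $\lambda=(2,2)$ and $p=2$ one checks that every pair with equal first columns has $\langle e_s,e_t\rangle\in\{0,\pm 4\}$, whereas the minimal valuation $\nu_2=1$ is realised only by a pair whose first columns differ (with $\langle e_s,e_t\rangle=2$). The decoupling is then unavailable, and deleting the first columns of $s$ and $t$ produces $\hat\lambda$-tableaux on different underlying sets of symbols. To treat an arbitrary pair I would return to the signed-colouring formula $\langle e_s,e_t\rangle=(-1)^{\pi_{st}}\sum_{C\in A(G)}(-1)^{C}$ and organise the admissible colourings of $G(s,t)$ according to their behaviour on the first column. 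The goal is to group the colourings so that, after a colouring-dependent relabelling that identifies the two complementary symbol sets, each group reassembles into a complete $\hat\lambda$-Gram entry $\langle e_{\hat a},e_{\hat b}\rangle$, thereby writing $\langle e_s,e_t\rangle$ as an integer combination of such entries; as each of these is divisible by $p^{\nu_p(\hat\lambda)}$, so is $\langle e_s,e_t\rangle$.

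Carrying out this grouping is where essentially all of the difficulty lies, and in particular tracking the signs $(-1)^{C}$ through the relabelling so that the partial contributions genuinely combine into full $\hat\lambda$-inner products rather than cancelling incoherently is the delicate point on which the proof turns. Granting it, every entry of the Gram matrix of $S^\lambda$ is divisible by $p^{\nu_p(\hat\lambda)}$ and the inequality follows. Should the direct grouping prove unwieldy, an alternative I would pursue is to induct on the number of symbols by which the two first columns differ, applying a Garnir-type relation to the first column of $t$ to reduce that discrepancy one step at a time while only multiplying the inner product by integers, so reducing to the aligned case of the second paragraph.
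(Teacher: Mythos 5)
Your reduction of the proposition to the statement ``$p^{\nu_p(\hat\lambda)}$ divides $\langle e_s,e_t\rangle$ for every pair of $\lambda$-tableaux'' is correct (the Schaper number is the valuation of the first elementary divisor, hence of the gcd of the Gram entries, and the paper uses this characterisation implicitly throughout), and your computation $\langle e_s,e_t\rangle=r!\,\langle e_{\hat s},e_{\hat t}\rangle$ in the aligned case is right. You also correctly diagnose, via the $(2,2)$ example, that the aligned case is not where the minimum is attained. But that diagnosis is exactly where the proof stops: the entire content of the proposition lies in the unaligned case, and there you only state a goal --- ``group the colourings so that, after a colouring-dependent relabelling, each group reassembles into a complete $\hat\lambda$-Gram entry'' --- without exhibiting the grouping, the relabelling, or the sign bookkeeping. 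This is not a minor omission to be granted: it is the whole argument. The actual mechanism (Fayers', and the one this paper repeatedly imitates in the proofs of \Cref{base case} and \Cref{abc}) is to delete the vertices $s_1$ and $t_1$ from $G(s,t)$, splice the dangling edges together according to a permutation $\sigma$ of the colours to form graphs $G_\sigma=G(s_\sigma,t_\sigma)$ for $\hat\lambda$-tableaux, and prove a signed bijection between admissible colourings of $G$ and pairs $(\sigma, C')$ with $C'$ a suitable colouring of $G_\sigma$; edges joining $s_1$ directly to $t_1$ require separate treatment and are what produce the extra factor $m!$ recorded in \Cref{p edges}. Moreover, in the closely analogous argument of \Cref{base case} this procedure introduces a normalising factor $\binom{a+b}{a}$ in the denominator which genuinely weakens the resulting bound, so one cannot wave at the decomposition and assume the signs and multiplicities ``combine coherently'': verifying that no such denominator arises for first-column removal is part of the proof.

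The fallback you offer is also not viable as stated. Garnir relations rewrite a polytabloid with a column violation as a signed sum of others; they do not provide a way to move prescribed entries into the first column of $t$ ``one step at a time'' while controlling the first columns of the tableaux that appear, and $e_{\pi t}=\pi e_t$ is not $\pm e_t$ for $\pi\notin C(t)$, so you cannot realign first columns by acting with a permutation either. As it stands the proposal is a correct reduction plus a correct special case, with the central combinatorial step asserted rather than proved.
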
 
An important consequence of the proof of \Cref{columnremoval} is the following: 
\begin{prop}\label{p edges}
Let $s$ and $t$ be $\lambda$-tableau. If there are $m$ edges from $s_1$ to $t_1$ in $G(s,t)$ then $\langle e_s , e_t\rangle$ is divisible by $m!p^{\nu_p(\hat{\lambda})}$, where $\hat{\lambda}$ is the partition whose Young diagram is obtained by removing the first column of the Young diagram for $\lambda$.
\end{prop}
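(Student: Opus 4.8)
The plan is to work entirely on the graph side. By the preceding proposition $\langle e_s,e_t\rangle=\pm\sum_{C\in A(G)}(-1)^C$, so it suffices to show that the signed count $\sum_{C\in A(G)}(-1)^C$ is divisible by $m!\,p^{\nu_p(\hat\lambda)}$. I would isolate the two factors separately and then argue that they arise from independent features of a colouring, so that they genuinely multiply.

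First the factor $m!$. Mark the $m$ edges running between $s_1$ and $t_1$. Since every colour occurs exactly once at $s_1$, these $m$ edges carry $m$ distinct colours in any admissible colouring, and likewise at $t_1$. Hence $S_m$ acts on $A(G)$ by permuting these $m$ colours among these $m$ edges. This preserves admissibility, because each of the edges is incident to both $s_1$ and $t_1$, so redistributing their colours still leaves exactly one edge of each colour at $s_1$ and at $t_1$; and it preserves the signature, because for each colour $j$ so involved the induced permutation $\pi_j$ continues to send the index $1$ to $1$ and is otherwise unchanged. The action is free, so $A(G)$ splits into $S_m$-orbits of size $m!$ on which $(-1)^C$ is constant, and summing orbit by orbit gives $m!\mid\sum_C(-1)^C$.

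For the factor $p^{\nu_p(\hat\lambda)}$ I would appeal to the proof of \Cref{columnremoval}. Deleting the vertices $s_1,t_1$ and their incident edges turns $G(s,t)$ into $\hat G=G(\hat s,\hat t)$, where $\hat s,\hat t$ are the $\hat\lambda$-tableaux obtained by deleting first columns, and restriction gives a map $A(G)\to A(\hat G)$. The argument proving $\nu_p(\lambda)\ge\nu_p(\hat\lambda)$ shows (equivalently, via the characterisation of the Schaper number as the least $p$-valuation among the elementary divisors of the Gram matrix) that $p^{\nu_p(\hat\lambda)}$ divides $\langle e_{\hat s},e_{\hat t}\rangle=\pm\sum_{\hat C\in A(\hat G)}(-1)^{\hat C}$ for every such pair, and more generally that $\sum_{C\in A(G)}(-1)^C$ expands as a $\mathbb{Z}$-linear combination of inner products of $\hat\lambda$-polytabloids, obtained by reattaching the deleted first-column edges to the columns they point into, each divisible by $p^{\nu_p(\hat\lambda)}$.

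The two estimates must now be combined into the product $m!\,p^{\nu_p(\hat\lambda)}$, and this is where the real work lies: divisibility by $m!$ and by $p^{\nu_p(\hat\lambda)}$ in isolation only yields divisibility by their lowest common multiple, and the two factors share the prime $p$. The key point is that the $S_m$-action permutes colours only on the edges $s_1$–$t_1$, which are exactly the edges deleted in passing to $\hat G$; thus $S_m$ acts within each fibre of $A(G)\to A(\hat G)$, transversally to the data carrying the $p$-divisibility. In the clean case where $s$ and $t$ share their first column (so $m=\lambda'_1$ and every first-column edge runs $s_1\to t_1$) each fibre is a single $S_m$-orbit on which the sign is constant and equal to $(-1)^{\hat C}$, whence $\sum_C(-1)^C=\pm m!\sum_{\hat C}(-1)^{\hat C}=\pm m!\,\langle e_{\hat s},e_{\hat t}\rangle$ and the result follows at once. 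The main obstacle is the general case $m<\lambda'_1$: then some first-column edges point into columns $\ge 2$, each restriction fibre is a union of several $S_m$-orbits with differing signs, and one must verify that the signed fibre totals still reassemble into the $\mathbb{Z}$-combination of $\hat\lambda$-inner products with the factor $m!$ intact, i.e. that symmetrising over the $m$ parallel edges does not disturb the column-removal sign bookkeeping. Checking this compatibility of signs, rather than either divisibility on its own, is the crux of the proof.
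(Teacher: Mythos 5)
Your first step is sound: the $m$ parallel edges between $s_1$ and $t_1$ carry $m$ distinct colours in any admissible colouring, permuting those colours among those edges is a free, signature-preserving action of $\mathcal{S}_m$ on $A(G)$ (each relevant induced permutation $\pi_l$ still sends $1$ to $1$ and is otherwise untouched), and summing over orbits gives $m!\mid\langle e_s,e_t\rangle$. But the proof is not finished, and you say so yourself: you name the combination of the two divisibilities as ``the crux'' and then stop. As written the argument yields only divisibility by $\operatorname{lcm}\bigl(m!,p^{\nu_p(\hat\lambda)}\bigr)$, which is strictly weaker than the claim whenever $m\ge p$. There is also a genuine error in the set-up of your second half: deleting the first columns of $s$ and $t$ need not produce row-equivalent tableaux, so there is in general no single graph $\hat G=G(\hat s,\hat t)$ and no restriction map $A(G)\to A(\hat G)$; the non-parallel edges deleted at $s_1$ and at $t_1$ must be re-matched in pairs of equal colour, and the target graph depends on that matching.

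The missing step is exactly the $G_\sigma$ construction from the proof of \Cref{columnremoval}, reproduced in the proof of \Cref{base case}: for each bijection $\sigma$ matching the $r=\lambda'_1-m$ non-parallel edges at $s_1$ with those at $t_1$, delete $s_1$ and $t_1$ and replace each matched pair by a single edge, obtaining $G_\sigma=G(u_\sigma,v_\sigma)$ for row-equivalent $\hat\lambda$-tableaux $u_\sigma,v_\sigma$. Every admissible colouring $C$ of $G$ determines a unique $\sigma$ (the colours of the non-parallel edges at $s_1$ and at $t_1$ pair up, since the $m$ parallel edges use the same $m$ colours at both vertices) together with an admissible colouring $C'$ of $G_\sigma$; conversely each pair $(\sigma,C')$ has exactly $m!$ preimages, given by distributing the $m$ leftover colours over the parallel edges, and all $m!$ of these have the same signature. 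Hence $\pm\langle e_s,e_t\rangle=m!\sum_{\sigma}\pm\langle e_{u_\sigma},e_{v_\sigma}\rangle$, and each summand is divisible by $p^{\nu_p(\hat\lambda)}$ because $\nu_p(\hat\lambda)$ is the least $p$-adic valuation of an inner product of elements of $S^{\hat\lambda}_{\mathbb{Z}}$. The sign compatibility you flag as the obstacle is resolved precisely because each fibre of $C\mapsto(\sigma,C')$ is a single $\mathcal{S}_m$-orbit on which the signature is constant, so the factor $m!$ is extracted from the sum \emph{before} the $p$-divisibility of the individual terms is invoked; you should carry this out rather than leave it as a remark.
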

 
This graph theoretic approach allows Fayers to go further than James, and characterise all of Specht modules whose Schaper number at least two:
\begin{theorem}\label{Fayers}
Let $\lambda \vdash n$. Then $\nu_p(\lambda)\ge 2$ if and only if one of the following hold:
\begin{enumerate}
\item $\lambda$ is doubly $p$-singular; that is there exists $i,j$ with $i \ge j+p$ and $\lambda_i=\lambda_{i+p-1}$ and $\lambda_j=\lambda_{j+p-1}$.
\item There exist $i$ such that $\lambda_i\le\lambda_{i+2p-2}+1$ and $\lambda_{i+p-1}\ge 2$.
\end{enumerate}
\end{theorem}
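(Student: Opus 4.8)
The plan is to convert the theorem into a divisibility statement about the Gram matrix and then dispatch the two conditions using the two different tools assembled above. First I would record the (essentially formal) observation that, writing the Gram matrix of $S^\lambda_{\mathbb{Z}}$ in the standard polytabloid basis and putting it into Smith normal form over $\mathbb{Z}_{(p)}$, a nonzero layer $L_k$ detects an elementary divisor of valuation exactly $k$; hence $\nu_p(\lambda)$ is the least valuation occurring, and $\nu_p(\lambda)\ge 2$ holds if and only if $p^2\mid\langle e_s,e_t\rangle$ for every pair of $\lambda$-tableaux $s,t$. This reduces the whole theorem to deciding when all polytabloid inner products are divisible by $p^2$, which is precisely the quantity controlled by \Cref{p edges} and the signed colouring sum $\sum_{C\in A(G)}(-1)^C$.

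For the sufficiency of (1) I would appeal directly to James' bound \Cref{James}. If $\lambda$ is doubly $p$-singular then either two distinct part-sizes each occur with multiplicity at least $p$, or, if the two singular blocks have equal length, monotonicity forces every intervening row to share that length, so a single part-size occurs with multiplicity at least $2p$. In both cases $\nu_p\!\left(\prod_j z_j!\right)\ge 2$, and \Cref{James} gives $\nu_p(\lambda)\ge 2$.

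For the sufficiency of (2) I would combine \Cref{columnremoval} with \Cref{p edges}. A pigeonhole argument on the $2p-1$ near-constant rows shows that (2) forces a $p$-singularity surviving deletion of the first column, so $\nu_p(\hat\lambda)\ge 1$. Fixing $s,t$ and letting $m$ be the number of edges from $s_1$ to $t_1$, \Cref{p edges} yields $m!\,p^{\nu_p(\hat\lambda)}\mid\langle e_s,e_t\rangle$; when $m\ge p$ the factor $m!$ supplies the second power of $p$ and we are done. The delicate case is $m<p$, where $m!$ is prime to $p$ and only one factor of $p$ is visible. Here I would exploit that (2) provides at least $p$ rows of a common length $\ge 2$, and look for a fixed-point-free action of a cyclic group of order $p$ on the colourings contributing to $\sum_{C}(-1)^C$ after dividing out $p^{\nu_p(\hat\lambda)}$, so that the residual sum breaks into orbits of size divisible by $p$. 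This symmetry analysis in the small-$m$ regime is the main obstacle of the sufficiency direction.

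Finally, for necessity I would argue the contrapositive: assuming neither (1) nor (2), produce a single pair $s,t$ with $p^2\nmid\langle e_s,e_t\rangle$. Negating (1) confines all $p$-singularities to one part-size, and negating (2) prevents the surrounding rows from forming a near-constant block of $2p-1$ rows of length $\ge 2$, leaving only a short, sharply-bounded singular block of $p\le z\le 2p-2$ equal rows as the unresolved case. Note that James' upper bound $\nu_p(\lambda)\le j^*$ is useless here, since $j^*\ge 2$; this is exactly why an explicit witness is required. I expect the construction—choosing $s,t$ adapted to the singular block and evaluating $\sum_{C}(-1)^C$ modulo $p^2$ to show it is a unit multiple of $p$—to be the principal difficulty of the proof, because it demands exact control of the colouring sum rather than mere divisibility.
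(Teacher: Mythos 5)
This theorem is stated in the paper as a quoted result of Fayers \cite{Fayers} and is given no proof there, so there is no internal proof to compare against; I am assessing your argument on its own terms. Your framework is the right one: $\nu_p(\lambda)\ge 2$ is indeed equivalent to $p^2\mid\langle e_s,e_t\rangle$ for all pairs of row-equivalent $\lambda$-tableaux, sufficiency of condition (1) does follow from \Cref{James} exactly as you say (disjoint singular blocks give either two part-sizes of multiplicity at least $p$, or one of multiplicity at least $2p$), and your pigeonhole observation that condition (2) forces a $p$-singularity to survive deletion of the first column is correct. But that accounts for only the easy half of the theorem, and the two steps you yourself flag as ``the main obstacle'' and ``the principal difficulty'' are genuine gaps, not deferred routine work.

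Concretely: (i) for sufficiency of (2) in the regime $m<p$, ``look for a fixed-point-free action of a cyclic group of order $p$'' is not an argument --- it is not even clear what set such a group should act on once the factor $p^{\nu_p(\hat\lambda)}$ has been ``divided out'', since that factor arises from grouping colourings by their restriction to the column-removed graph rather than from a literal quotient, and no such symmetry is visible on the raw graph $G(s,t)$ of a general $\lambda$ satisfying (2). The workable route, visible elsewhere in this paper, is first to isolate the $2p-1$ near-constant rows via \Cref{(lambda*mu)} and \Cref{columnremoval}, reduce to $(x^a,(x-1)^b)$ with $a+b=2p-1$, apply the colouring identity of \Cref{base case} (from \cite[3.12]{Fayers}) together with $\nu_p({{2p-1}\choose a})=0$ to reduce to $\nu_p((2^{2p-1}))\ge 2$, and only there is the ``without loss of generality $m\ge p$'' step legitimate, because swapping the two equal-length columns of $t$ lies in the column stabiliser and so changes $e_t$ only by a sign (compare the proof of \Cref{2^2p}). (ii) The necessity direction is entirely missing: you must exhibit, for every $\lambda$ failing both conditions, an explicit pair $s,t$ with $p^2\nmid\langle e_s,e_t\rangle$ and evaluate the colouring sum modulo $p^2$; this is the bulk of Fayers' paper and nothing in your sketch starts it. Your description of the residual case is also too narrow: a block of equal rows of length $1$ can have up to $2p-1$ rows without either condition holding, and the negation of (2) constrains the rows adjacent to a singular block (they must differ in length from it by at least $2$ on the appropriate side), not merely the block itself, and any correct witness construction must use exactly these constraints.
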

This result, together with \Cref{(lambda*mu)} immediately gives the corollary below. The reader can see the obvious extension of this and should now be able to construct partitions with arbitrarily large Schaper numbers.
\begin{cor}\label{firstconditions}
Let $\lambda \vdash n$. Then $\nu_p(\lambda)\ge 3$ if one of the following hold:
\begin{enumerate}
\item $\lambda$ is triply $p$-singular.
\item There exist $i,j$ with $\{i,\dots, i+2p-2\}\cap\{j,\dots,j+p-1\}=\emptyset$ such that $\lambda_i\le\lambda_{i+2p-2}+1$ and $\lambda_{i+p-1}\ge 2$ and $\lambda_j=\lambda_{j+p-1}$.
\end{enumerate}
\end{cor}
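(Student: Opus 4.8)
The plan is to reduce both cases to Fayers' characterisation (\Cref{Fayers}) and to James' bound (\Cref{James}, used in the form $\nu_p(\mu)\ge 1$ iff $\mu$ is $p$-singular), and then to glue the pieces together using the superadditivity of \Cref{(lambda*mu)}. The unifying idea is that each hypothesis forces a configuration of rows that can be isolated into a consecutive block, and that the separation conditions are precisely what allow $\lambda$ to be written as a $\star$-product of sub-partitions, each carrying a known lower bound on its Schaper number.

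For the first case, suppose $\lambda$ is triply $p$-singular, so that there are indices $k<j<i$ with $j\ge k+p$ and $i\ge j+p$ satisfying $\lambda_k=\lambda_{k+p-1}$, $\lambda_j=\lambda_{j+p-1}$ and $\lambda_i=\lambda_{i+p-1}$. I would cut $\lambda$ into the three consecutive row-blocks $\alpha$ (rows $1$ to $j-1$), $\beta$ (rows $j$ to $i-1$) and $\gamma$ (rows $i$ onwards). Since $\alpha_m=\lambda_m$ throughout its range, and likewise for $\beta$ and $\gamma$, each is a genuine partition and $\lambda=\alpha\star\beta\star\gamma$. The separation inequalities $k+p-1\le j-1$ and $j+p-1\le i-1$ guarantee that the $k$-singularity lies entirely inside $\alpha$, the $j$-singularity inside $\beta$ and the $i$-singularity inside $\gamma$; hence each of $\alpha,\beta,\gamma$ is $p$-singular and so has Schaper number at least $1$. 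Two applications of \Cref{(lambda*mu)} then give $\nu_p(\lambda)\ge\nu_p(\alpha)+\nu_p(\beta)+\nu_p(\gamma)\ge 3$.

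For the second case the same strategy applies, now splitting $\lambda$ into two blocks. The disjointness $\{i,\dots,i+2p-2\}\cap\{j,\dots,j+p-1\}=\emptyset$ forces one block to lie strictly above the other, so either $j+p-1<i$ or $i+2p-2<j$. I would cut between the two blocks accordingly: when $j+p-1<i$, take $\alpha$ to be rows $1$ to $i-1$ and $\beta$ to be rows $i$ onwards, so that $\lambda=\alpha\star\beta$. The block $\{i,\dots,i+2p-2\}$ then sits at the top of $\beta$, and deleting the rows above it re-indexes the configuration to witness index $1$, so that $\lambda_i\le\lambda_{i+2p-2}+1$ and $\lambda_{i+p-1}\ge 2$ become precisely condition (2) of \Cref{Fayers} for $\beta$; hence $\nu_p(\beta)\ge 2$. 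Meanwhile $\alpha$ contains the $p$-singularity $\lambda_j=\lambda_{j+p-1}$ (as $j+p-1\le i-1$), so $\nu_p(\alpha)\ge 1$, and \Cref{(lambda*mu)} gives $\nu_p(\lambda)\ge 3$. The symmetric sub-case $i+2p-2<j$ is handled identically, with the type-(2) block now lying in the upper piece and the singularity in the lower.

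The only real care needed, and the step I expect to be the main (if modest) obstacle, is verifying that passing to a consecutive sub-block preserves the defining inequalities of \Cref{Fayers} and \Cref{James} under the re-indexing of rows, and that each cut genuinely produces partitions whose $\star$-product is $\lambda$. Both are immediate consequences of the separation and disjointness hypotheses, which were designed precisely to stop the relevant row-blocks from straddling a cut; once this bookkeeping is in place, the stated bounds follow formally from \Cref{(lambda*mu)}.
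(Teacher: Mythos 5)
Your proposal is correct and matches the paper's intended argument: the paper derives this corollary directly from \Cref{Fayers} together with the superadditivity of \Cref{(lambda*mu)} (with \Cref{James} supplying $\nu_p\ge 1$ for the $p$-singular blocks), exactly as you do by cutting $\lambda$ into consecutive row-blocks at the points guaranteed by the disjointness hypotheses.
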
 

Before continuing we shall give an example of how decomposition numbers can be calculated using \Cref{Fayers}:
\begin{example}
Let $p=2$ and consider the block of $\mathcal{S}_{{13}}$ containing all Specht modules $S^\lambda$ where $\lambda$ has 2-core $(2,1)$.
Assume that the decomposition numbers are known for $\mathcal{S}_n$ where $n<13$. Using column elimination \cite{Donkin} and by observing the linear relations between the ordinary characters of $\mathcal{S}_{13}$ on $2$-regular classes we can compute the first part of the first column of the decomposition matrix below:

$$\begin{array}{cc}
 & (12,1)\\
(12,1) & 1\\
(10,3) & 0\\
(10,1^3) & 1\\
(8,5) & 1\\
(8,3,2) & x\\
(8,3,1^2) & x+1\\
(8,2^2,1) & x
\end{array}$$
Schaper's sum formula, \Cref{Schaper's sum formula}, tells us $$[S^{(8,3,2)}:D^{(12,1)}]\le [S^{(12,1)}:D^{(12,1)}]+[S^{(8,5)}:D^{(12,1)}]$$
and
\begin{align*}
[S^{(8,2,2,1)}:D^{(12,1)}]\le &-2[S^{(12,1)}:D^{(12,1)}]+[S^{(10,1^3)}:D^{(12,1)}]\\&-2[S^{(8,5)}:D^{(12,1)}]+2[S^{(8,3,2)}:D^{(12,1)}]\\&+[S^{(8,3,1,1)}:D^{(12,1)}].
\end{align*}
That is $x\le 2$ and $x\le3x-2$. \Cref{Fayers} allows us to improve the second inequality, as we know that the Schaper number of $(8,2,2,1)$ is at least two. Thus, using \Cref{Sum formula cor}, the second inequality becomes $x\le\frac{3x-2}{2}$ and we conclude that $[S^{(8,2,2,1)}:D^{(12,1)}]=2$.
\end{example}
Of course this decomposition number can be calculated using other techniques, but this calculation demonstrates how a better understanding of Schaper numbers may lead to new decomposition numbers for the symmetric group. The following lemma gives us another way of constructing partitions of large Schaper number. 
\begin{lemma}\label{abc}
Let $\lambda=((x+1)^a,x^b,(x-1)^c)\vdash n$. Then $$\nu_p(\lambda)\ge \nu_p((x^{a+b+c}))-\nu_p({{a+b+c}\choose {a,b,c}})-\nu_p(c!).$$
Here $\nu_p$ is used to denote both the usual $p$-adic valuation (of an integer) and the Schaper number (of a partition).
\end{lemma}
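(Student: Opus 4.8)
The plan is to bound $\nu_p(\lambda)$ from below by showing that every entry of the integral Gram matrix of $S^\lambda_{\mathbb{Z}}$ is divisible by a suitable power of $p$. Since the submodules $S^\lambda_i$ are defined by the divisibility $p^i\mid\langle x,y\rangle$, if $p^D\mid\langle e_s,e_t\rangle$ for \emph{every} pair of $\lambda$-tableaux $s,t$, then $\overline{S^\lambda_0}=\cdots=\overline{S^\lambda_D}$ and hence $\nu_p(\lambda)\ge D$; this is precisely the mechanism behind \Cref{columnremoval} and \Cref{p edges}. Writing $N=a+b+c$ and $D=\nu_p((x^N))-\nu_p\binom{N}{a,b,c}-\nu_p(c!)$, and using the identity $\binom{N}{a,b,c}\,c!=N!/(a!\,b!)$, the lemma is therefore equivalent to the single divisibility statement
$$\nu_p\!\left(\frac{N!}{a!\,b!}\,\langle e_s,e_t\rangle\right)\ge \nu_p\big((x^N)\big)\qquad\text{for all }\lambda\text{-tableaux }s,t.$$

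I would prove this by exhibiting $\tfrac{N!}{a!\,b!}\langle e_s,e_t\rangle$ as an integer combination of inner products computed in the Specht module of the rectangle $(x^N)$. Using Fayers's signed-colouring formula I would write $\langle e_s,e_t\rangle=\pm\sum_{C\in A(G(s,t))}(-1)^C$ and set up a correspondence between the admissible colourings of $G(s,t)$ and those of the corresponding graph for $(x^N)$. The partition $\lambda$ and the rectangle share their $N$ rows and their first $x-1$ full, height-$N$ columns; they differ only in that $\lambda$ carries an extra column $x+1$ of height $a$ and a deficient column $x$ of height $a+b$, whereas the rectangle has a full column $x$ of height $N$. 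The factor $N!/(a!\,b!)$ is meant to appear as the index in $\mathcal{S}_N$ of the subgroup $\mathcal{S}_a\times\mathcal{S}_b$ that permutes the $a$ rows of length $x+1$ and the $b$ rows of length $x$ among themselves: symmetrising a $\lambda$-colouring over the rows recovers the full $\mathcal{S}_N$ row-symmetry of the rectangle, at the cost of this index. The asymmetry of the bound — that we divide by $a!\,b!$ but not by $c!$ — should reflect that passing to the rectangle requires \emph{adding} one box to each short row, placing them at distinct heights of column $x$ and thereby distinguishing (ordering) the $c$ short rows and breaking their $\mathcal{S}_c$-symmetry, whereas the $a$ long rows merely \emph{lose} the box in column $x+1$ and the $b$ medium rows are unchanged, so the symmetries $\mathcal{S}_a$ and $\mathcal{S}_b$ survive.

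Once the correspondence is in place, every rectangular inner product on the right is divisible by $p^{\nu_p((x^N))}$ by the very definition of $\nu_p((x^N))$, which gives the displayed inequality and hence the lemma. The main obstacle is making the colouring correspondence precise: one must reconcile the differing column sets of $\lambda$ and $(x^N)$ (inserting the $c$ missing boxes into column $x$ and deleting column $x+1$) and, most delicately, track the signs $(-1)^C$ through these box moves so that the symmetrisation produces the factor $N!/(a!\,b!)$ exactly, rather than some proper divisor or multiple of it. As scaffolding, or as an alternative route, I would attempt an induction on $x$: removing the first column of $\lambda$ yields $\hat\lambda=(x^a,(x-1)^b,(x-2)^c)$, a partition of the same form with $x$ replaced by $x-1$, so \Cref{columnremoval} gives $\nu_p(\lambda)\ge\nu_p(\hat\lambda)$, and the refined divisibility of \Cref{p edges} is the natural tool for recovering, at each step, the increment $\nu_p((x^N))-\nu_p(((x-1)^N))$ lost in passing from the rectangle to its first-column deletion. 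Controlling this increment uniformly over all tableaux is where the real work lies.
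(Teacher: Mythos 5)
You have correctly reduced the lemma to a divisibility statement for polytabloid inner products, and correctly guessed the mechanism: since $\binom{N}{a,b,c}\,c!=N!/(a!\,b!)$, it suffices to exhibit $\tfrac{N!}{a!\,b!}\langle e_s,e_t\rangle$ as an integer combination of inner products for the rectangle $(x^N)$, each of which is divisible by $p^{\nu_p((x^N))}$. This is exactly the strategy of the paper. However, the entire content of the lemma is the construction of the colouring correspondence that realises this combination, and you have explicitly left that as ``the main obstacle''; moreover, the sketch you give of it would not work as stated, so this is a genuine gap rather than a routine verification.

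Concretely, two things go wrong. First, $\tfrac{N!}{a!\,b!}\langle e_s,e_t\rangle$ is not matched with a \emph{single} rectangle inner product: in the paper one deletes the two column-$(x+1)$ vertices of $G(s,t)$ and, for \emph{each} $\sigma\in\mathcal{S}_a$, reconnects their former neighbours by edges $e'_k$ from $s_{g(k)}$ to $t_{f(\sigma k)}$ and adds $c$ parallel edges from $s_x$ to $t_x$, arriving at
$$(-1)^{\pi_{st}}\langle e_s,e_t\rangle=\frac{(-1)^a}{c!\,\binom{N}{a,b,c}}\sum_{\sigma\in\mathcal{S}_a}(-1)^{\pi_{s_\sigma t_\sigma}}\langle e_{s_\sigma},e_{t_\sigma}\rangle,$$
a signed sum over $a!$ rectangle graphs; the $c!$ arises because each admissible colouring of $G$ corresponds to $c!$ ``respectable'' colourings of $G_\sigma$ (the $c$ added parallel edges are interchangeable), and the multinomial coefficient arises from symmetrising over all $N$ colours. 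Your heuristic that $N!/(a!\,b!)$ is the index of $\mathcal{S}_a\times\mathcal{S}_b$ in the row-permutation group of the rectangle does not produce this identity. Second, this construction is only valid when $G(s,t)$ has \emph{no} edge joining $s_{x+1}$ to $t_{x+1}$ (otherwise the bijection between admissible colourings of $G$ and pairs $(\sigma,C')$ breaks down), so the paper argues by induction on $a$ with a case split: if such an edge exists, it is deleted, reducing to $((x+1)^{a-1},x^{b+1},(x-1)^c)$ at the cost of a factor $a$ exactly as in \Cref{base case}. Your proposal contains neither the induction on $a$ nor the case split. Your fallback route, induction on $x$ via \Cref{columnremoval} and \Cref{p edges}, does not close the gap either: \Cref{p edges} requires edges from $s_1$ to $t_1$, which a general pair of row-equivalent $\lambda$-tableaux need not possess, and \Cref{columnremoval} alone discards precisely the increment you need to recover.
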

We shall prove this by induction on $a$, where the base case $a=0$ follows from the proof of \cite[3.12]{Fayers} :

\begin{lemma}[Base case]\label{base case}
Let $\lambda=(x^a,(x-1)^b)\vdash n$. Then $$\nu_p(\lambda)\ge \nu_p((x^{a+b}))-\nu_p({{a+b}\choose {a}}).$$
\end{lemma}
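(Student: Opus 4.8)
The plan is to reduce the statement to a divisibility property of the inner products of polytabloids, and then to transfer the divisibility that is already known for the rectangle $(x^{a+b})$ across to $\lambda$.

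First I would record the characterisation of the Schaper number that is implicit in the graph-theoretic set-up. Writing $\mathcal G = (\langle e_s, e_t\rangle)$ for the Gram matrix of the bilinear form with respect to a $\mathbb Z$-basis of standard polytabloids, the integral submodules are $S^\lambda_i=\{x : \mathcal G x \equiv 0 \bmod p^i\}$, since $\langle x,e_t\rangle=(\mathcal G x)_t$. Reducing $\mathcal G$ to Smith normal form, one checks that $\bar S^\lambda_0=\cdots=\bar S^\lambda_i$ exactly when $i$ is at most the least elementary divisor exponent of $\mathcal G$, so the first Schaper layer occurs at that exponent. As the least elementary divisor equals the first determinantal divisor, i.e. the gcd of the entries, this gives
\[
\nu_p(\lambda)=\min_{s,t}\nu_p\langle e_s,e_t\rangle,
\]
the minimum running over all pairs of $\lambda$-tableaux (the Garnir relations making the minimum over all tableaux agree with that over standard ones). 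Hence it suffices to prove $\nu_p\langle e_s,e_t\rangle\ge \nu_p((x^{a+b}))-\nu_p\binom{a+b}{a}$ for every pair $s,t$; equivalently, that $\binom{a+b}{a}\langle e_s,e_t\rangle$ is divisible by $p^{\nu_p((x^{a+b}))}$.

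Next I would set up the comparison with the rectangle $R=(x^{a+b})$. In $\lambda$ the last column (column $x$) has height $a$, whereas in $R$ it has height $a+b$, and the two shapes agree in columns $1,\dots,x-1$. Given $\lambda$-tableaux $s,t$, I complete them to $R$-tableaux $\tilde s,\tilde t$ by filling the $b$ vacant cells of column $x$ with $b$ new symbols. Using $\sum_{C\in A(G)}(-1)^C=(-1)^{\pi_{st}}\langle e_s,e_t\rangle$, I would analyse the admissible colourings of $G(\tilde s,\tilde t)$: because the completed cells carry edges joining $\tilde s_x$ to $\tilde t_x$, colouring these with the new colours $a+1,\dots,a+b$ extends each admissible colouring of $G(s,t)$ to one of $G(\tilde s,\tilde t)$ by adjoining fixed points, hence without altering the signature. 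The target is to express $\binom{a+b}{a}\langle e_s,e_t\rangle$ as a signed integer combination of inner products $\langle e_S,e_T\rangle$ of $R$-tableaux; since each such rectangle inner product is divisible by $p^{\nu_p((x^{a+b}))}$ by the first paragraph, the required divisibility, and hence the lemma, would follow.

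The combinatorial heart, and the step I expect to be the main obstacle, is producing \emph{exactly} the factor $\binom{a+b}{a}$ while controlling the signs. The naive completion of column $x$ in the fixed rows $a+1,\dots,a+b$ realises only those rectangle colourings in which the original $a$ column-$x$ edges carry colours $1,\dots,a$; the remaining colourings of $G(\tilde s,\tilde t)$ are spurious and must be accounted for. I would handle this by symmetrising over the $\binom{a+b}{a}$ choices of which $a$ of the $a+b$ rows of $R$ play the role of the long rows of $\lambda$, equivalently over the cosets of $\mathcal S_a\times\mathcal S_b$ in $\mathcal S_{a+b}$, so that the index $\binom{a+b}{a}$ emerges precisely as an overcounting factor while the spurious colourings either cancel in sign or reassemble into genuine rectangle inner products. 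Verifying this cancellation and the exact sign bookkeeping, following \cite[3.12]{Fayers}, is the delicate part; once it is in place the divisibility transfers at once, and the resulting bound in particular strengthens the estimate $\nu_p(\lambda)\ge\nu_p(a!\,b!)$ coming from \Cref{James}.
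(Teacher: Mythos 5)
Your opening reduction is sound: $\nu_p(\lambda)=\min_{s,t}\nu_p\langle e_s,e_t\rangle$, the minimum taken over pairs of standard (equivalently, arbitrary) $\lambda$-tableaux, is exactly how the paper passes between the filtration and divisibility of inner products, so it does suffice to bound $\nu_p\langle e_s,e_t\rangle$ below for every pair. From there, however, you and the paper part ways. The paper inducts on $a$: when $G(s,t)$ has an edge from $s_x$ to $t_x$ it deletes that edge and passes to $(x^{a-1},(x-1)^{b+1})$; when it has none, it deletes the vertices $s_x,t_x$ and reroutes the dangling half-edges, landing in the \emph{smaller} rectangle $((x-1)^{a+b})$. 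You instead complete column $x$ upwards into the larger rectangle $(x^{a+b})$. That is a genuinely different comparison, and the difficulty you flag at the end is not a matter of delicate bookkeeping --- it is where the argument breaks.

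Concretely, each admissible colouring of $G(s,t)$ extends to exactly $b!$ admissible colourings of $G(\tilde s,\tilde t)$ in which the $b$ new $\tilde s_x$--$\tilde t_x$ edges carry the colours $c_{a+1},\dots,c_{a+b}$, and all $b!$ extensions have the \emph{same} signature (each new edge merely adjoins the fixed point $x\mapsto x$ to the relevant permutation), so they reinforce rather than cancel. Combining this with the free, signature-preserving action of $\mathcal{S}_{a+b}$ on the colours of the rectangle gives
$$\langle e_{\tilde s},e_{\tilde t}\rangle=\pm\, b!\binom{a+b}{a}\langle e_s,e_t\rangle=\pm\frac{(a+b)!}{a!}\,\langle e_s,e_t\rangle,$$
and hence only $\nu_p(\lambda)\ge\nu_p((x^{a+b}))-\nu_p\bigl(\binom{a+b}{a}\bigr)-\nu_p(b!)$. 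Averaging over the $\binom{a+b}{a}$ cosets of $\mathcal{S}_a\times\mathcal{S}_b$ cannot recover the lost $\nu_p(b!)$, since that factor already arises within a single coset; and no refinement of the sign bookkeeping will remove it, because the inequality you are aiming for fails. For $p=2$ and $\lambda=(2^2,1^2)$ one has $\nu_2((2^4))\ge 4$ (by \Cref{p=2k=4}, or the pigeonhole argument in the proof of \Cref{4422}), so the displayed statement would force $\nu_2((2^2,1^2))\ge 4-\nu_2\binom{4}{2}=3$; but taking $s$ to be the initial tableau and $t$ the row-equivalent tableau with columns $\{2,3,5,6\}$ and $\{1,4\}$, one computes $\langle e_s,e_t\rangle=\pm 12$, so $\nu_2((2^2,1^2))=2$, in agreement with \Cref{p=2k=3}. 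The bound your construction does deliver, $\nu_p(\lambda)\ge\nu_p((x^{a+b}))-\nu_p((a+b)!/a!)$, is precisely the $a=0$ instance of \Cref{abc} that the paper actually uses downstream, so the productive fix is to prove that weaker inequality (or the variant with $((x-1)^{a+b})$ which the paper's own vertex-deletion argument yields), rather than the lemma as displayed.
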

\begin{proof}
We shall prove this by induction on $a$, with the base case $a=0$ being trivial. Suppose $a>0$ and let $s$ and $t$ be row equivalent $\lambda=(x^a,(x-1)^b)$-tableau. Construct the graph $G=G(s,t)$ and suppose there is an edge $e$ from $s_x$ to $t_x$ in $G$. Permuting the colours $c_1,\dots,c_a$ gives rise to a faithful and signature preserving action of $\mathcal{S}_a$ on the admissible colourings $A(G)$ of $G$. Summing the signatures of all the admissible colourings of $G$ in which the edge $e$ has colour $a$ gives $\frac{\langle e_s, e_t \rangle}{a}$. Deleting the edge $e$ results in a graph $G(s',t')$ for some $(x^{a-1},(x-1^{b+1}))$-tableau $s'$ and $t'$. There is an obvious correspondence between admissible colourings of $G(s',t')$ and those admissible colourings of $G$ where $e$ has the colour $c_a$. Observe that, as this correspondence preserves the signature, the sum of all admissible colourings of $G$ in which the edge $e$ has colour $a$ is $\langle e_{s'}, e_{t'} \rangle$. Thus 
\begin{align*}
\nu_p(\langle e_s, e_t \rangle) &=\nu_p(a\langle e_{s'}, e_{t'} \rangle)\\
&\ge \nu_p(a)+\nu_p((x^{a-1},(x-1)^{b+1})) \\
&\ge \nu_p(a)+\nu_p((x^{a+b}))-\nu_p({{a+b}\choose {a-1}})\\
&\ge\nu_p((x^{a+b}))-\nu_p({{a+b}\choose {a}}).
\end{align*}
Now suppose there are no edges from $s_x$ to $t_x$ in $G$. Let $e_{i_1},e_{i_2},\dots,e_{i_a}$ be the edges which meet $s_{x}$, and $e_{j_1},e_{j_2},\dots,e_{j_a}$ be the edges which meet $t_{x}$. Suppose also that $e_{i_k}$ meets $t_{f(k)}$ and $e_{j_k}$ meets $s_{g(k)}$. For each $\sigma\in \mathcal{S}_a$ define a graph $G_\sigma$ as follows: 
delete the vertices $s_{x}$ and $t_{x}$ from the graph $G$ and then add edges $e'_1,\dots,e'_a$ such that $e'_k$ is incident on $s_{g(k)}$ and $t_{f(\sigma_k)}$. We shall call an admissible colouring $C\in A(G_\sigma)$ \textit{respectable} if it assigns a different colour to each of the edges $e'_1,\dots,e'_a$, and we denote the set of all such colourings $R(G_\sigma)$.

Each admissible colouring $C\in A(G)$ gives determines a $\sigma\in \mathcal{S}_a$ and a respectable colouring $C' \in R(G_\sigma)$ with $e'_1,\dots,e'_a$ having colours $c_1,\dots,c_a$ in some order as follows: the permutation $\sigma$ is chosen such that the edges $e_{i_k}$ and $e_{j(\sigma_k)}$ have the same colour, then the edges of $G_\sigma$ which appear in $G$ are given the same colour as in $C$, the edges $e'_k$ are given the same colour as $e_{i_k}$. By examining the permutations induced by the colourings we see that $$(-1)^C=(-1)^a(-1)^{C'}.$$

Conversely, a respectable colouring $C'\in R(G_\sigma)$ where $e'_1,\dots,e'_a$ have colours $c_1,\dots,c_a$ gives rise to an admissible colouring $C\in A(G)$ by giving all the edges which appear in both $G$ and $G_\sigma$ the same colour in $C$ as in $C'$, and by giving each of $e_{i_k}$ and $e_{j_k}$ the same colour as $e'_k$. Again we see that $$(-1)^C=(-1)^a(-1)^{C'},$$ and that these two operations are mutually inverse, thus
$$(-1)^{\pi_{st}}\langle e_s, e_t \rangle = (-1)^a\sum_{\sigma\in \mathcal{S}_a}\sum_C (-1)^C,$$
where the second sum is over all respectable colourings of $G_\sigma$ where the edges $e'_1,\dots,e'_a$ have colours $c_1,\dots,c_a$. There is a faithful signature preserving action of $\mathcal{S}_{a+b}$ on $R(G_\sigma)$ by permuting all the colours, so we get  
$$(-1)^{\pi_{st}}\langle e_s, e_t \rangle = (-1)^a\frac{1}{{a+b \choose a}}\sum_{\sigma\in \mathcal{S}_a}\sum_{C\in R(G_\sigma)} (-1)^C.$$

We will now show that we may replace the sum over $R(G_\sigma)$ by one over $A(G_\sigma)$.

For an admissible colouring $C\in A(G_\sigma)$ we define
$$C_l = \mid\lbrace k : e'_k \text{ has colour } c_l\rbrace \mid,$$ 
and observe that $C$ is respectable if and only if each $C_l=1$. For integers $d_1,\dots,d_{\lambda'_1}$ we define $\mathbf{C}(d_1,\dots,d_{\lambda'_1})$ to be the set of pairs $(\sigma,C')$, where $\sigma\in \mathcal{S}_{p}$ and $C'\in A(G_\sigma)$ with $C_l=d_l$ for all $l$. The group $\mathcal{S}_{d_1}\times\mathcal{S}_{d_2}\times\cdots\mathcal{S}_{d_{\lambda'_1}}$ acts, with signature, on $\mathbf{C}(d_1,\dots,d_{\lambda'_1})$ by permuting the endpoints of the edges (i.e. the elements of $\lbrace t_{f(1)},\dots, t_{f(m)}\rbrace $) of the same colour. If any of the $d_l$ are greater than one, then there is some permutation $\rho\in \mathcal{S}_{d_1}\times\mathcal{S}_{d_2}\times\cdots\mathcal{S}_{d_{\lambda'_1}}$ with negative signature, and so if $\rho C = D$ then
 $$ (-1)^D = -(-1)^C.$$
 Summing over all pairs $(\sigma,C')\in \mathbf{C}(d_1,\dots,d_{\lambda'_1})$, we obtain 
 $$ \sum_{(\sigma,C') \in\mathbf{C}(d_1,\dots,d_{\lambda'_1})} (-1)^C = -\sum_{(\sigma,C') \in\mathbf{C}(d_1,\dots,d_{\lambda'_1})} (-1)^C, $$ and so is zero, hence  $$\sum_{\sigma\in \mathcal{S}_m}\sum_{C\in A(G_\sigma)} (-1)^C  = \sum_{\sigma\in \mathcal{S}_m}\sum_{C\in R(G_\sigma)} (-1)^C.$$
Thus, $$(-1)^{\pi_{st}}\langle e_s, e_t \rangle =\frac{(-1)^a}{{a+b\choose a}}\sum_{\sigma\in \mathcal{S}_a}(-1)^{\pi_{{s_\sigma}{t_\sigma}}}\langle e_{s_\sigma}, e_{t_\sigma} \rangle,$$
and hence $\nu_p(\lambda)\ge \nu_p((x^{a+b}))-\nu_p({{a+b}\choose {a}})$ as required.
\end{proof}
The proof of \Cref{abc} is similar:
\begin{proof}[Proof of \Cref{abc}]
Suppose $\lambda=((x+1)^a,x^b,(x-1)^c)$ with $a>0$. Let $s$ and $t$ be two row equivalent $\lambda$-tableaux and construct the graph $G=G(s,t)$. If there is an edge from $s_{x+1}$ to $t_{x+1}$ then there is a faithful and signature preserving action of $\mathcal{S}_a$ on the admissible colourings of $G$, by permuting colours $c_1,\dots,c_a$. Summing the signatures of all admissible colourings of $G$ in which $e$ has colour $c_a$ we get $\frac{\langle e_s, e_t\rangle}{a}$, which is divisible by $p^{\nu_p(\langle e_s, e_t\rangle)-\nu_p(a)}$. Deleting the edge $e$ gives the graph $G(s',t')$ for $((x+1)^{a-1},x^{b+1},(x-1)^{c})$-tableaux $s'$ and $t'$. 
Exactly as before there is a signature preserving one-to-one correspondence between the admissible colourings of $G(s',t')$ and colourings of $G$ in which $e$ has colour $c_a$. Thus 
\begin{align*}
\nu_p(\langle e_s, e_t \rangle) &=\nu_p(a\langle e_{s'}, e_{t'} \rangle)\\
&\ge \nu_p(a)+\nu_p((x^{a-1},(x-1)^{b+1})) \\
&\ge \nu_p(a)+\nu_p((x^{a+b}))-\nu_p({{a+b}\choose {a-1}})-\nu_p(c!)\\
&\ge\nu_p((x^{a+b}))-\nu_p({{a+b}\choose {a}})-\nu_p(c!)
\end{align*}
as required.

Now suppose there is no edge from $s_{x+1}$ to $t_{x+1}$. Let $e_{i_1},e_{i_2},\dots,e_{i_a}$ be the edges which meet $s_{x+1}$, and $e_{j_1},e_{j_2},\dots,e_{j_a}$ be the edges which meet $t_{x+1}$. Suppose also that $e_{i_k}$ meets $t_{f(k)}$ and $e_{j_k}$ meets $s_{g(k)}$. For each $\sigma\in \mathcal{S}_a$ define a graph $G_\sigma$ as follows: 
delete the vertices $s_{\lambda_1}$ and $t_{\lambda_1}$ from the graph $G$ and then add edges $e'_1,\dots,e'_a$ and $E_1,\dots,E_c$ such that $e'_k$ is incident on $s_{g(k)}$ and $t_{f(\sigma_k)}$, and each $E_k$ goes from $s_{\lambda_1-1}$ to $t_{\lambda_1-1}$. In this context we shall call an admissible colouring $C\in A(G_\sigma)$ \textit{respectable} if it assigns a different colour to each of $e'_1,\dots,e'_a,E_1,\dots$ and $E_c$, and we denote the set of all such colourings $R(G_\sigma)$.

Each admissible colouring $C\in A(G)$ gives determines a $\sigma\in \mathcal{S}_a$ and gives rise to $c!$ respectable colourings $C' \in R(G_\sigma)$ with $e'_1,\dots,e'_a$ having colours $c_1,\dots,c_a$ in some order, while $E_1,\dots,E_c$ have the colours $c_{a+b+1},\dots,c_{a+b+c}$ in some order as follows: the permutation $\sigma$ is chosen such that the edges $e_{i_k}$ and $e_{j_(\sigma_k)}$ have the same colour, then the edges of $G_\sigma$ which appear in $G$ are given the same colour as in $C$, the edges $e'_k$ are given the same colour as $e_{i_k}$ and the edges $E_1,\dots,E_c$ are given the colours $c_{a+b+1},\dots,c_{a+b+c}$ in some order. By examining the permutations induced by the colourings we see that $$(-1)^C=(-1)^a(-1)^{C'}.$$

Conversely, a respectable colouring $C'\in R(G_\sigma)$ where $e'_1,\dots,e'_a$ have colours $c_1,\dots,c_a$ and the edges $E_1,\dots,E_c$ have the colours $c_{a+b+1},\dots,c_{a+b+c}$ gives rise to an admissible colouring $C\in A(G)$ by giving all the edges which appear in both $G$ and $G_\sigma$ the same colour in $C$ as in $C'$, and by giving each of $e_{i_k}$ and $e_{j_k}$ the same colour as $e'_k$. Again we see that $$(-1)^C=(-1)^a(-1)^{C'},$$ and we also observe that these two operations are mutually inverse, thus
$$(-1)^{\pi_{st}}\langle e_s, e_t \rangle = \frac{(-1)^a}{c!}\sum_{\sigma\in \mathcal{S}_a}\sum_C (-1)^C,$$
where the sum is over all respectable colourings of $G_\sigma$ where the edges $e'_1,\dots,e'_a$ have colours $c_1,\dots,c_a$ and $E_1,\dots,E_c$ have the colours $c_{a+b+1},\dots,c_{a+b+c}$. There is a faithful signature preserving action of $\mathcal{S}_{m}$ on $R(G_\sigma)$ by permuting all the colours, so we get  
$$(-1)^{\pi_{st}}\langle e_s, e_t \rangle = \frac{(-1)^a}{c!}\frac{1}{{a+b+c \choose a,b,c}}\sum_{\sigma\in \mathcal{S}_a}\sum_{C\in R(G_\sigma)} (-1)^C.$$
As in \Cref{base case} the sum over $R(G_\sigma)$ can be replaced by one over $A(G_\sigma)$, thus $$(-1)^{\pi_{st}}\langle e_s, e_t \rangle =\frac{(-1)^a}{c!}\frac{1}{{a+b+c \choose a,b,c}}\sum_{\sigma\in \mathcal{S}_a}(-1)^{\pi_{{s_\sigma}{t_\sigma}}}\langle e_{s_\sigma}, e_{t_\sigma} \rangle,$$ and hence $\nu_p(\lambda)\ge \nu_p((x^{a+b+c}))-\nu_p({{a+b+c}\choose {a,b,c}})-\nu_p(c!).$
\end{proof}
\newpage

\section{Schaper Numbers for $p=2$}
We shall now investigate which other partitions have high Schaper number for $p=2$. 

\begin{lemma}\label{333}
 Let $\lambda\vdash n$ and suppose there exists an $i$ such that $\lambda_i\le\lambda_{i+2}+1$ and $\lambda_{i+1}\ge 3$, then $\nu_2(\lambda)\ge 3$
\end{lemma}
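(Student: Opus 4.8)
The plan is to strip $\lambda$ down to the three offending rows, reduce those to a rectangle, and then settle a single small base case by hand. First I would apply superadditivity (\Cref{(lambda*mu)}): writing $\lambda = A\star B\star C$ with $B=(\lambda_i,\lambda_{i+1},\lambda_{i+2})$ and $A$, $C$ the rows above and below, we get $\nu_2(\lambda)\ge \nu_2(A)+\nu_2(B)+\nu_2(C)\ge\nu_2(B)$, so it suffices to treat the case where $\lambda$ has exactly three rows. The hypotheses $\lambda_i\le\lambda_{i+2}+1$ and $\lambda_{i+1}\ge 3$ then force $B$ into one of three families: $(x,x,x)$ with $x\ge 3$, $(x+1,x,x)$ with $x\ge 3$, or $(x+1,x+1,x)$ with $x\ge 2$ (the only member with a part below $3$ being $(3,3,2)$).

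Next I would collapse each family to a rectangle. Both families with two distinct part-sizes are covered by \Cref{base case}: since $\binom{3}{1}=\binom{3}{2}=3$ is odd, it gives $\nu_2((x+1,x,x))\ge\nu_2(((x+1)^3))-\nu_2\binom{3}{1}=\nu_2(((x+1)^3))$ and likewise $\nu_2((x+1,x+1,x))\ge\nu_2(((x+1)^3))$, while $(x,x,x)$ is already a rectangle. Iterating column removal (\Cref{columnremoval}) gives $\nu_2((y^3))\ge\nu_2(((y-1)^3))\ge\cdots\ge\nu_2((3^3))$ for every $y\ge 3$, so in all three cases $\nu_2(\lambda)\ge\nu_2((3,3,3))$.

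It remains to prove the base case $\nu_2((3,3,3))\ge 3$, and this is where the genuine content sits: the three reduction tools above each return only $\nu_2((3^3))\ge 1$ (a single $2$-singularity), so the missing factor of $4$ cannot be manufactured from them and must be extracted directly. I would prove it by bounding the $2$-adic valuation of the Gram matrix of $S^{(3,3,3)}_{\mathbb Z}$ in the standard basis, that is, by showing $8\mid\langle e_s,e_t\rangle$ for all row-equivalent $(3,3,3)$-tableaux $s,t$ through the signed sum $\langle e_s,e_t\rangle=\pm\sum_{C\in A(G)}(-1)^C$ over admissible colourings of the $3$-regular bipartite graph $G(s,t)$. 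As a check on the target, \Cref{James} supplies the matching upper bound $\nu_2((3,3,3))\le\nu_2((3!)^3)=3$, so the value is in fact exactly $3$. The main obstacle is precisely this final step: the free $\mathcal{S}_3$-action permuting the three colours is signature-preserving and so yields only one factor of $2$, and producing the remaining divisibility by $4$ requires a finer sign-reversing involution on the colourings that is not already provided by the machinery in hand (one could alternatively read it off a direct evaluation of the finite Gram matrix, or from the known mod-$2$ decomposition data for $\mathcal{S}_9$).
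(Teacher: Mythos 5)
Your reduction to the single base case $\nu_2((3^3))\ge 3$ is correct and is essentially the paper's own route: superadditivity (\Cref{(lambda*mu)}) strips away the rows above and below, \Cref{base case} (or \Cref{abc}) handles the two non-rectangular three-row shapes since $\binom{3}{1}=\binom{3}{2}=3$ is odd, and \Cref{columnremoval} brings every rectangle down to $(3^3)$. The problem is that you then stop exactly where the content of the lemma begins: you state that $8\mid\langle e_s,e_t\rangle$ must be shown for all row-equivalent $(3^3)$-tableaux $s,t$, correctly observe that the colour-permuting $\mathcal{S}_3$-action only yields a single factor of $2$, and defer the remaining factor of $4$ to ``a finer sign-reversing involution'' that you do not construct, or to an unspecified machine computation. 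As written this is a genuine gap, not a proof.

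The missing idea is already in the toolkit you listed, namely \Cref{p edges}. Split on whether the multigraph $G(s,t)$ contains a repeated edge. If some column of $s$ and some column of $t$ share two entries then, because $(3^3)$ is rectangular, you may permute whole columns of $s$ and of $t$ (which changes neither $e_s$ nor $e_t$) so that this double edge joins $s_1$ to $t_1$; \Cref{p edges} with $m=2$ and $\hat{\lambda}=(2^3)$ then gives $2!\cdot 2^{\nu_2((2^3))}\mid\langle e_s,e_t\rangle$, and $\nu_2((2^3))\ge 2$ by \Cref{Fayers}, so $8\mid\langle e_s,e_t\rangle$. If there is no repeated edge then $G(s,t)$ is $K_{3,3}$, so up to relabelling there is a single pair $(s,t)$ left to examine, and a direct check shows $\langle e_s,e_t\rangle=0$. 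This two-case argument is precisely the ``finer'' mechanism you were looking for; without it, or an explicit evaluation of the $(3^3)$ Gram matrix that you mention but do not carry out, the crux of the lemma remains unproved.
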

\begin{proof}
By \Cref{(lambda*mu),columnremoval,abc} it suffices to show that $\nu_2((3^3))\ge 3$. We observe that this calculation has being carried out by L\"{u}beck \cite{Luebeck}, but we shall include it here for completeness. Let $s$ and $t$ be row equivalent $(3^3)$-tableau and let $G=G(s,t)$. Suppose there is a pair of edges between any two verticies; without loss of generality let these verticies be $s_1$ and $t_1$. We have already seen (\Cref{Fayers}) that $\nu_2((2^3))\ge 2$, and so, by \Cref{p edges}, we conclude that $8\mid \langle e_s, e_t \rangle$. If there are no pairs of edges then, possibly after relabelling and reordering, 

\begin{center}
 $s=
\begin{ytableau}
1&2&3\\
4&5&6\\
7&8&9\\
\end{ytableau}$ 
and
$t=
\begin{ytableau}
1&2&3\\
6&4&5\\
8&9&7\\
\end{ytableau}$,\end{center} and the polytabloids $e_s$ and $e_t$ are orthogonal.
\end{proof}

\begin{lemma}\label{4422} 
Let $\lambda\vdash n$ and suppose there exist $i$ and $j$ such that $\lambda_i=\lambda_{i+1}=\lambda_{j}+2=\lambda_{j+1}+2\ge 4$, then $\nu_2(\lambda)\ge 3$
\end{lemma}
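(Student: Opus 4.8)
The plan is to follow the strategy of \Cref{333}: reduce the hypothesis to the single minimal configuration $(4,4,2,2)$ (from which the lemma takes its label) and compute its Schaper number directly. Write $k=\lambda_i\ge 4$, and refine the given indices by taking $i$ to be the \emph{largest} index with $\lambda_i=\lambda_{i+1}=k$. Since a pair $\lambda_j=\lambda_{j+1}=k-2$ exists below the length-$k$ block we have $j\ge i+2$, so $\lambda_{i+2}\ge\lambda_j=k-2$; as $\lambda_{i+2}<k$ this forces $\lambda_{i+2}\in\{k-1,k-2\}$, giving a clean dichotomy.

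If $\lambda_{i+2}=k-1$ then rows $i,i+1,i+2$ have lengths $k,k,k-1$, so $\lambda_i\le\lambda_{i+2}+1$ and $\lambda_{i+1}=k\ge 2$, while the pair $\lambda_j=\lambda_{j+1}$ lies at some $j\ge i+3$ and is therefore disjoint from $\{i,i+1,i+2\}$. Hence \Cref{firstconditions} applies directly and $\nu_2(\lambda)\ge 3$. If instead $\lambda_{i+2}=k-2$, then (as row $i+2$ is the topmost length-$(k-2)$ row and some length-$(k-2)$ pair occurs weakly below it) we must also have $\lambda_{i+3}=k-2$, so rows $i,\dots,i+3$ form the block $(k,k,k-2,k-2)$. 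Writing $\lambda=\alpha\star(k,k,k-2,k-2)\star\gamma$, which is legitimate since $\lambda_{i-1}\ge k$ and $\lambda_{i+4}\le k-2$, and using repeated application of \Cref{(lambda*mu)} together with the nonnegativity of Schaper numbers gives $\nu_2(\lambda)\ge\nu_2((k,k,k-2,k-2))$. Stripping first columns via \Cref{columnremoval} preserves the shape $(\ell,\ell,\ell-2,\ell-2)$ and lowers $k$ to $4$, so it remains only to prove $\nu_2((4,4,2,2))\ge 3$.

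For this base case I would bound $\nu_2(\langle e_s,e_t\rangle)$ for row-equivalent $(4,4,2,2)$-tableaux $s,t$ using the graph $G=G(s,t)$, exactly as in \Cref{333}. The columns of $(4,4,2,2)$ have lengths $4,4,2,2$, so the two length-$4$ columns may be relabelled freely. Should some pair of length-$4$ columns $s_a,t_b$ with $a,b\in\{1,2\}$ carry at least two edges, relabel it to $(s_1,t_1)$ and apply \Cref{p edges}: here $\hat\lambda=(3,3,1,1)$ is doubly $2$-singular, so $\nu_2(\hat\lambda)\ge 2$ by \Cref{Fayers}, and $8=2!\,2^2$ divides $\langle e_s,e_t\rangle$. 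The four entries in rows $3,4$ occupy only columns $1,2$ in both tableaux, so there are always at least four edges between the length-$4$ columns; thus the only way to avoid a double edge there is for these four edges to fall one per pair, with no further edge joining two length-$4$ columns.

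I expect this residual configuration to be the main obstacle. It is extremely rigid: a short count shows that every entry of rows $1,2$ lying in columns $1,2$ of $s$ must land in columns $3,4$ of $t$, and conversely, which pins down $G$ up to relabelling to a short list of graphs. For each such graph I would compute the signed sum of admissible colourings by tracking the induced row permutations, precisely as in the $K_{3,3}$ calculation of \Cref{333}, and show that $\langle e_s,e_t\rangle$ either vanishes or is again divisible by $8$. (Alternatively this single value could be read off from known decomposition data for $\mathcal{S}_{12}$.) Granting this finite check, $8\mid\langle e_s,e_t\rangle$ for all $s,t$, whence $\nu_2((4,4,2,2))\ge 3$ and the lemma follows from the reductions above.
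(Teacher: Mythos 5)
Your reduction of the general case to $(4,4,2,2)$ is sound, and in places more carefully argued than the paper's own one-line appeal to \Cref{(lambda*mu)}, \Cref{columnremoval} and \Cref{abc}: the dichotomy on $\lambda_{i+2}$, with the $\lambda_{i+2}=k-1$ branch absorbed by \Cref{firstconditions}, correctly disposes of the configurations in which the two singular pairs are not adjacent and so cannot be split off with $\star$. The relabelling of equal-length columns before invoking \Cref{p edges} is also legitimate, since swapping two columns of the same length fixes both the tabloid and the column stabiliser, hence the polytabloid.

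The gap is in the base case. Your argument for $\nu_2((4,4,2,2))\ge 3$ only covers those pairs $(s,t)$ for which some two length-$4$ columns carry a double edge; the residual configuration --- one edge in each of the four pairs $(s_a,t_b)$ with $a,b\in\{1,2\}$, and the eight entries of rows $1,2$ crossing between the long and the short columns --- is genuinely realisable, and you never compute anything for it: ``I would compute \dots and show \dots'' and ``granting this finite check'' is where the proof actually stops. That residual case is the entire difficulty here; unlike $(3^3)$ in \Cref{333} it does not collapse to a single graph with vanishing inner product, so a case-by-case signed count over admissible colourings would still be needed. The paper sidesteps this precisely by not stopping at $(4,4,2,2)$: \Cref{abc} with $(x,a,b,c)=(3,2,0,2)$ gives $\nu_2((4,4,2,2))\ge\nu_2((3^4))-\nu_2\bigl(\binom{4}{2,0,2}\bigr)-\nu_2(2!)=\nu_2((3^4))-2$, and for the rectangle $(3^4)$ a pigeonhole argument (four edges leave $s_1$ but $t$ has only three columns) guarantees a double edge on the first columns in \emph{every} graph, so \Cref{p edges} applies unconditionally and iterating up from $\nu_2((1^4))=3$ (via \Cref{James}) yields $\nu_2((3^4))\ge 5$. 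To complete your route you must either actually carry out the finite check on the residual graphs for $(4,4,2,2)$, or import \Cref{abc} and trade $(4,4,2,2)$ for a rectangle, where no residual case survives.
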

\begin{proof}
By \Cref{(lambda*mu),columnremoval,abc} it suffices to show that $\nu_2((3^4))\ge 5$, which again has been verified by L\"{u}beck\cite{Luebeck}. It also follows from \Cref{James,p edges} by observing first $\nu_2((1^4))=3$, and then that any graph $G=G(s,t)$ where $s,t$ are $(2^4)$-tableau necessarily contains a pair of edges between two verticies which, without loss of generality, we may assume to be $s_1$ and $t_1$ and so $\nu_2((1^4))\ge 4$. Similarly any $(3^4)$-tableau necessarily contains a pair of edges between two verticies which again we may assume to be $s_1$ and $t_1$, and thus $\nu_2((3^4))\ge 5$.
\end{proof}

We are now ready to state the main results of this paper for $p=2$. 
\begin{theorem}\label{p=2k=3}
Let $\lambda \vdash n$ and $p=2$. Then $\nu_p(\lambda)\ge 3$ if and only if one of the following hold:
\begin{enumerate}
\item $\lambda$ is triply $2$-singular.
\item There exist $i,j$ with $\{i, i+1, i+2\}\cap\{j,j+1\}=\emptyset$ such that $\lambda_i\le\lambda_{i+2}+1$ and $\lambda_{i+1}\ge 2$ and $\lambda_j=\lambda_{j+1}$.
\item $\lambda$ is 4-singular.
\item There exist $i,j$ such that $\lambda_i=\lambda_{i+1}=\lambda_{j}+2=\lambda_{j+1}+2\ge 4$.
\item There exist $i$ such that $\lambda_i\le\lambda_{i+2}+1$ and $\lambda_{i+1}\ge 3$.
\end{enumerate}
\end{theorem}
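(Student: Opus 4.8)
The plan is to prove the two implications separately, using throughout the principle
\[
\nu_2(\lambda)=\min_{s\sim_{\mathrm{row}}t}\nu_2(\langle e_s,e_t\rangle),
\]
the minimum taken over all pairs of row-equivalent $\lambda$-tableaux with the convention $\nu_2(0)=\infty$. This reflects that the Schaper number is the least $p$-adic valuation occurring among the entries of the Gram matrix of the form on $S^\lambda$, and it underlies the computations above. Thus $\nu_2(\lambda)\ge 3$ exactly when $8\mid\langle e_s,e_t\rangle$ for every such pair, whereas to establish $\nu_2(\lambda)\le 2$ it is enough to exhibit one pair $(s,t)$ with $\nu_2(\langle e_s,e_t\rangle)\le 2$.

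The sufficiency of the five conditions is largely an assembly of results already established. Condition (1) is \Cref{firstconditions}(1), and condition (2) is precisely \Cref{firstconditions}(2) specialised to $p=2$, where $2p-2=2$ and $p-1=1$. Condition (4) is \Cref{4422} and condition (5) is \Cref{333}. For condition (3), a $4$-singular partition has $z_j\ge 4$ for some part-length $j$, so $4!$ divides $\prod_k z_k!$ and \Cref{James} yields $\nu_2(\lambda)\ge\nu_2(4!)=3$. Hence each of (1)--(5) forces $\nu_2(\lambda)\ge 3$.

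For the converse I argue by contraposition, assuming that none of (1)--(5) holds and producing a pair $(s,t)$ with $\nu_2(\langle e_s,e_t\rangle)\le 2$. Writing $z_v$ for the number of parts equal to $v$ and $R=\{v:z_v\ge 2\}$, the failure of (3) gives $z_v\le 3$ for all $v$, and the failure of (1) then forces $|R|\le 2$, since three distinct repeated values provide three well-separated singular pairs and hence triple $2$-singularity. The failure of (5) shows that a triple ($z_v=3$) can occur only at a value $v\le 2$, and more generally that $\lambda_i\ge\lambda_{i+2}+2$ whenever $\lambda_{i+1}\ge 3$; the failures of (2) and (4) then control how the at most two repeated blocks may sit relative to one another and to any \Cref{Fayers}(2)-type window, reducing $\lambda$ to a short list of types indexed by $|R|\in\{0,1,2\}$ and by the multiplicities ($2$ or $3$) and positions of the repeated parts. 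In each surviving type I construct explicit row-equivalent $s$ and $t$ differing by a permutation localised to the repeated block(s), generalising the cyclic rearrangement of the last rows in \Cref{333}, and evaluate $\langle e_s,e_t\rangle$ modulo $8$ as a signed sum over the admissible colourings of $G(s,t)$. The exact colouring identities behind \Cref{base case} and \Cref{abc} rewrite this sum in terms of inner products on the sub-partition carried by the repeated blocks alone --- a partition with at most two distinct part-values, each of multiplicity at most three --- where the valuation is immediate: for $R=\emptyset$ the partition is $2$-regular and $\nu_2(\lambda)=0$, for $|R|=1$ a single block gives $\nu_2(\lambda)\le 2$, and the finitely many shapes with $|R|=2$ are handled uniformly, the possibly large part-values being inert in the computation.

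I expect the main obstacle to lie entirely in the necessity direction, and within it in two linked points. The first is confirming that the negations of (1)--(5) are jointly exhaustive, so that no partition with $\nu_2(\lambda)\ge 3$ escapes the case list; the delicate boundaries here are exactly the numerical thresholds built into the statement --- the value bound $v\le 2$ on triples coming from the "$\lambda_{i+1}\ge 3$" in (5), and the bound "$\ge 4$" in (4), as shown for instance by the contrast between $(3,3,1,1)$, for which $\nu_2=2$, and $(5,5,3,3)$, which satisfies (4). The second is making the localised permutation in the $|R|=2$ case precise enough that the resulting colouring sum has $2$-adic valuation exactly $2$ and not $3$, since it is here that the constraints from $\neg(2)$, $\neg(4)$ and $\neg(5)$ genuinely interact.
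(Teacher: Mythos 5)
Your sufficiency argument is correct and coincides with the paper's: conditions (1) and (2) follow from \Cref{firstconditions}, condition (3) from \Cref{James} via $\nu_2(4!)=3$, and conditions (4) and (5) are \Cref{4422} and \Cref{333}. The working principle $\nu_2(\lambda)=\min_{s\sim_{\mathrm{row}}t}\nu_2(\langle e_s,e_t\rangle)$ is also sound and is what the paper uses implicitly throughout.

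The necessity direction, however, is a plan rather than a proof, and the gap is not merely one of detail. First, the case list is never actually produced: you assert that the failures of (1)--(5) reduce $\lambda$ ``to a short list of types'' but do not exhibit that list or check it is exhaustive, and the boundary cases you yourself flag (e.g.\ $(2,2,1,1)$, $(3,3,1,1)$, triples of small parts) are exactly where the enumeration is delicate. Second, and more seriously, the tool you propose for evaluating $\langle e_s,e_t\rangle$ modulo $8$ points in the wrong direction: the colouring identities of \Cref{base case} and \Cref{abc} are engineered to prove \emph{divisibility} of inner products (lower bounds on $\nu_2$), whereas necessity requires \emph{non}-divisibility, i.e.\ a pair with $8\nmid\langle e_s,e_t\rangle$. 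To make the ``large part-values are inert'' claim rigorous you would need the factorisation $\langle e_s,e_t\rangle=\prod_i\langle e_{s^{(i)}},e_{t^{(i)}}\rangle$ over the repeated blocks, which holds only for carefully chosen $t$ (rows of unique length reversed so that common tabloids are rigid on them, as in the paper's proof of \Cref{p=2k=4}); nothing in your outline supplies this. Until the tableaux are written down and the valuations computed, the theorem is not proved. It is worth noting that the paper avoids this entire difficulty by a different device: for a partition $\lambda$ satisfying \Cref{Fayers} but none of (1)--(5), it applies Schaper's sum formula (\Cref{Schaper's sum formula}) to the regularisation $\lambda^r$, using $[S^\lambda:D^{\lambda^r}]=1$ and the fact that the only contributing $\nu$ are the one-box-moved partitions with $[S^\nu:D^{\lambda^r}]=1$, to conclude $\sum_{i\ge1}[\bar{S^\lambda_i}:D^{\lambda^r}]=2$ and hence that $D^{\lambda^r}$ lies in the second layer. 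That argument replaces all Gram-matrix computations by a short count of rim-hooks and known decomposition numbers, and is the route you should consider if the explicit inner-product calculations prove unwieldy.
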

\begin{proof}
The `if' direction is \Cref{firstconditions}, \Cref{James} and \Cref{333,4422}. 
To prove the `only if' direction we must show that if $\lambda$ satisfies one of the properties of \Cref{Fayers} but none of the properties above then $\nu_2(\lambda) = 2$.
First, suppose $\lambda$ is doubly $2$-singular and let $\lambda_i=\lambda_{i+1}$ and $\lambda_j=\lambda_{j+1}$ be the two disjoint singularities. As $\lambda$ is not 4-singular and does not satisfy \emph{4.} or \emph{5.} above, we may assume that $\lambda_i\ge \lambda_j+3$ and also that there are no other rows of length $\lambda_i$ or $\lambda_j$, nor are there rows of lengths $\lambda_i \pm 1 $ or $\lambda_j \pm 1$. In this case $\lambda^r$, the 2-regularisation, \cite{JamesKerber}, of $\lambda$, is 
$${\lambda^r}_{k}=
\begin{cases}
\lambda_k & k\notin \{i,i+1,j,j+1\}\\
\lambda_k+1 & k \in \{i,j\}\\
\lambda_k-1 & k\in \{i+1,j+1\}
\end{cases}.
$$
We shall show that $D^{\lambda^r}$ is in the second Schaper layer, and thus the Schaper number of $\lambda$ is two. As $[S^\lambda:D^{\lambda^r}]=1$, the value of $\sum_{i=1} [S^\lambda_{(i)}:D^{\lambda^r}]$ is the number of the layer in which $D^{\lambda^r}$ appears. By Schaper's formula, $$\sum_{i=1} [S^\lambda_{(i)}:D^{\lambda^r}]=\sum_{\nu} a_\nu [S^\nu:D^{\lambda^r}],$$ for $\nu \rhd \lambda$. As $[S^\nu:D^{\lambda^r}]=0$ for all  $\nu \rhd \lambda^r$, the sum is over all $\nu$ such that $\lambda\lhd\nu\unlhd\lambda^r$, and thus any $\nu$ contributing to the sum must have $\nu_k=\lambda_k$ for all $k\notin \{i,i+1,j,j+1\}$. Also, $a_\nu$ is zero unless there is are rim-hooks $g$ and $h$ of $Y(\lambda)$ and $Y(\nu)$ respectively such that $\nu_p(\mid g \mid) \ne 0$ and $Y(\lambda\backslash g)=Y(\nu \backslash h)$. The only contributing terms are when $\nu\in\{\lambda',\lambda''\}$ where 

$\lambda'_k=\begin{cases}
\lambda_k & k\notin\{i,i+1\}\\
\lambda_k+1 & k=i\\
\lambda_k-1 & k=i+1
\end{cases}$
and  
$\lambda''_k=\begin{cases}
\lambda_k & k\notin\{j,j+1\}\\
\lambda_k+1 & k=j\\
\lambda_k-1 & k=j+1
\end{cases}$,\newline with $a_{\lambda'}=a_{\lambda''}=1$.
By row and column removal \cite{Jamesrowremoval}, or by observing that each of these partitions have $\lambda^r$ as their 2-regularisations, we see that $[S^\nu:D^{\lambda^r}]=1$ for $\nu\in\{\lambda',\lambda''\}$, and thus $\sum_{i=1} [S^\lambda_{(i)}:D^{\lambda^r}]=2$ as required. 
If $\lambda$ satisfies property \emph{2.} of \Cref{Fayers}, but none of the above, the only 2-sigularity in $\lambda$ is a pair of rows of length 2 and we conclude $\nu_2(\lambda)=2$ by \Cref{James}.
\end{proof}
As before \Cref{(lambda*mu)} allows us to get some conditions for which  $\nu_2(\lambda)\ge 4$. These are the first six conditions below.
\begin{theorem}\label{p=2k=4}
Let $\lambda \vdash n$ and $p=2$. Then $\nu_p(\lambda)\ge 4$ if and only if one of the following hold:
\begin{enumerate}
    \item $\lambda$ is quadruply $2$-singular; that is there are $i,j,k$ and $l$ such that $\lambda_i=\lambda_{i+1}$, $\lambda_j=\lambda_{j+1}$, $\lambda_k=\lambda_{k+1}$ and $\lambda_l=\lambda_{l+1}$ with $\{i, i+1\}$ $\{j, j+1\}$, $\{k, k+1\}$ and $\{l, l+1\}$ pairwise disjoint.
    \item There exists $i$ such that $\lambda_i=\lambda_{i+3}=1$ and $j\notin \{i,\dots,i+3\}$ with $\lambda_j=\lambda_{j+1}$.
	\item There exist $i,j$ with $\{i,\dots, i+2p-2\}\cap\{j,\dots,j+p-1\}=\emptyset$ such that $\lambda_i\le\lambda_{i+2p-2}+1$ and $\lambda_{i+p-1}\ge 3$ and $\lambda_j=\lambda_{j+p-1}$.
	\item There exists $i,j,k$ with $i \ge j+p\ge k+p$ and $\lambda_i=\lambda_{i+p-1}$ and $\lambda_j=\lambda_{j+p-1}$  and $\lambda_k\le\lambda_{i+2p-2}+1$ and $\lambda_{i+p-1} = 2$.
	\item There exist $i,j$ with $\{i,i+1,i+2\}\cap\{j,j+1\}=\emptyset$ such that $\lambda_i\le\lambda_{i+2}+1$ and $\lambda_{i+1}\ge 3$ and $\lambda_j=\lambda_{j+1}$.
	\item There exist $i,j,k$ with $\{i,i+1\}\cap\{j,j+1\}\cap\{k,k+1\}=\emptyset$ such that $\lambda_i=\lambda_{i+1}=\lambda_{j}+2=\lambda{j+1}+2\ge 4$ and $\lambda_k=\lambda_{k+1}$.
	\item There exists $i$ such that $\lambda_i=\lambda_{i+3}>1$.
	\item There exists $i$ such that $\lambda_i\le\lambda_{i+3}+2$ with $\lambda_{i+1}\ge 4$, $\lambda_{i+2}\ge 3$ and $\lambda_{i+3}\ge 1$.
\end{enumerate}
\end{theorem}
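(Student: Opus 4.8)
The plan is to follow the architecture of \Cref{Fayers} and \Cref{p=2k=3}, separating the `if' and `only if' directions. For the `if' direction the first six conditions should come essentially for free, as the sentence preceding the statement asserts: each of \emph{1.}--\emph{6.} describes a partition in which two or more configurations of known Schaper number --- each forcing a contribution via \Cref{p=2k=3}, \Cref{Fayers} or \Cref{James} --- occupy pairwise disjoint blocks of rows. Since disjoint contiguous intervals of rows are linearly ordered, I can split $\lambda$ at the intervening row boundaries (legitimate because $\lambda_m \ge \lambda_{m+1}$ always holds), realise $\lambda$ as an iterated $\star$-product of the corresponding pieces, and add the bounds using \Cref{(lambda*mu)}. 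For instance, in case \emph{5.} the block $\{i,i+1,i+2\}$ gives a piece with $\nu_2 \ge 3$ by condition \emph{5.} of \Cref{p=2k=3}, while the disjoint pair $\{j,j+1\}$ gives a piece with $\nu_2 \ge 1$ by \Cref{James}; in case \emph{1.} one splits three times. This is just the passage from \Cref{Fayers} to \Cref{firstconditions}, carried one step further.

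The genuinely new work is conditions \emph{7.} and \emph{8.}, each of which I would reduce to a single building block. For \emph{7.}, the rows $i,\dots,i+3$ form a rectangle $(x^4)$ with $x \ge 2$, so writing $\lambda = \alpha \star (x^4) \star \beta$ and applying \Cref{(lambda*mu)} reduces us to $\nu_2((x^4)) \ge 4$; \Cref{columnremoval} then brings this down to $\nu_2((2^4)) \ge 4$, which is precisely the estimate already extracted inside the proof of \Cref{4422} (every $(2^4)$-graph carries a double edge since its four edges at $s_1$ land in only two columns of $t$, and $\nu_2((1^4)) = 3$ by \Cref{James}, so \Cref{p edges} gives $2!\cdot 2^3 = 2^4 \mid \langle e_s,e_t\rangle$). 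For \emph{8.}, the same $\star$-extraction isolates the four rows $(\lambda_i,\dots,\lambda_{i+3})$, and the constraints $\lambda_{i+1}\ge 4$, $\lambda_{i+2}\ge 3$, $\lambda_i \le \lambda_{i+3}+2$ force $\lambda_i=\lambda_{i+1}=4$, $\lambda_{i+2}=3$, $\lambda_{i+3}=2$ in the minimal case, so the task is to prove $\nu_2((4,4,3,2)) \ge 4$.

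I expect this last building block to be the main obstacle. \Cref{abc} applied with $x=3,a=2,b=1,c=1$ only yields $\nu_2((4,4,3,2)) \ge \nu_2((3^4)) - \nu_2\binom{4}{2,1,1} = 5 - 2 = 3$, one short, and \Cref{columnremoval} degrades $(4,4,3,2)$ to $(3,3,2,1)$, losing too much. I would therefore attack $(4,4,3,2)$ directly by the graph method of \Cref{abc} and \Cref{base case}: analyse $G(s,t)$ for row-equivalent $(4,4,3,2)$-tableaux, show that a multiple edge is forced (or run the $\mathcal{S}_a$-averaging argument on the equal top rows), and identify the residual graph with a partition of Schaper number at least $3$, so that \Cref{p edges} supplies the extra factor of $p$. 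Failing a clean argument of this kind, this single value can be confirmed by the same explicit computation L\"ubeck \cite{Luebeck} used for $(3^3)$ and $(3^4)$.

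For the `only if' direction I would mirror the end of the proof of \Cref{p=2k=3}. Assume $\lambda$ satisfies one of the conditions of \Cref{p=2k=3}, so $\nu_2(\lambda) \ge 3$, but none of \emph{1.}--\emph{8.}; these negations pin down the local shape of $\lambda$ very tightly, forbidding a fourth singularity, a second disjoint $\nu_2 \ge 3$ configuration, the rectangle of \emph{7.}, and the staircase of \emph{8.}. In each case I would exhibit a $2$-regular $\mu$ with $[S^\lambda : D^\mu] = 1$ --- the natural candidate being the $2$-regularisation $\lambda^r$ or a close relative --- and compute via \Cref{Schaper's sum formula} that $\sum_{i\ge 1}[\bar{S}^\lambda_i : D^\mu] = \sum_{\lambda \lhd \nu \unlhd \mu} a_\nu\,[S^\nu : D^\mu] = 3$, so that $D^\mu$ lies in exactly the third layer and hence $\nu_2(\lambda)=3$. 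The delicate part, and the second obstacle, is the bookkeeping: one must enumerate the $\nu \rhd \lambda$ admitting a rim-hook pair of even length $|g|$ to $\mu$, evaluate each $[S^\nu:D^\mu]$ by row and column removal, and verify across the full case split (organised by which condition of \Cref{p=2k=3} holds) that the signed coefficients $a_\nu$ sum to exactly $3$ rather than more.
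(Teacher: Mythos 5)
Your overall architecture matches the paper's: conditions \emph{1.}--\emph{6.} via $\star$-decomposition and \Cref{(lambda*mu)}, condition \emph{7.} via $\nu_2((2^4))\ge 4$, and the `only if' direction by locating $D^{\lambda^r}$ in the third layer using \Cref{Schaper's sum formula}. You are also right to flag that \Cref{abc} applied to $(4,4,3,2)$ with $\nu_2((3^4))\ge 5$ loses $\nu_2\binom{4}{2,1,1}=2$ and lands at $3$; the paper's ``hence'' at that point is genuinely too quick, and a direct argument (or computation) for $(4,4,3,2)$ is needed.

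However, there is a concrete gap in your treatment of condition \emph{8.}: the constraints do \emph{not} force $(\lambda_i,\dots,\lambda_{i+3})=(4,4,3,2)$ in the minimal case. For example $(5,4,4,4)$, $(6,5,4,4)$, $(4,4,4,3)$ and $(4,4,4,2)$ all satisfy \emph{8.} (and none of \emph{1.}--\emph{7.}), and none of them reduces to $(4,4,3,2)$ by column removal or $\star$-extraction --- column removal from $(5,4,4,4)$ gives $(4,3,3,3)$, not something containing your base case. So proving $\nu_2((4,4,3,2))\ge 4$ alone does not dispose of condition \emph{8.}; one must enumerate all minimal four-row shapes and the paper's proof does exactly this, handling $(4,4,3,2),(4,4,4,2),(4,4,3,3),(4,4,4,3),(5,4,4,3)$ through $\nu_2((3^4))\ge 5$ and the remaining $(5,4,4,4),(6,5,4,4)$ through the separately computed fact $\nu_2((4^4))\ge 6$, fed through \Cref{abc}. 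Your plan anticipates neither the case enumeration nor the need for the $(4^4)$ computation. A secondary shortfall: in the `only if' direction the pure regularisation argument does not work uniformly; for the family $\lambda=(r,r-1,\dots,m+2,m,m,m-2,\dots,3,2,2,1)$ arising from condition \emph{2.} of \Cref{p=2k=3}, the paper instead constructs explicit row-equivalent tableaux $t,u$ and checks $\langle e_t,e_u\rangle=24$ (or $8$ when $m=4$) is not divisible by $16$, so you should expect to need such an inner-product argument in at least one branch of the case split.
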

\begin{proof}
Observe that the `if' direction follows from \Cref{p=2k=3}, \Cref{Fayers}, \Cref{(lambda*mu)} and \Cref{James} for conditions 1-6. We observed that $\nu_2((2^4))\ge 4$ in the proof of \Cref{4422} above. Also in that proof we show that $\nu_2((3^4))\ge 5$ and hence $\nu_2(\lambda)\ge 4$ for $\lambda\in\{(4,4,3,2),(4,4,4,2),(4,4,3,3),(4,4,4,3),(5,4,4,3)\}$. To see that a partition satisfying \emph{8.} has $\nu_2(\lambda)\ge 4$ it remains to check this for $\lambda\in\{(5,4,4,4),(6,5,4,4)\}$. This follows from the fact that $\nu_2((4^4))\ge 6$, which can be checked by computing the inner products of polytabloids $e_s$ and $e_t$ for all $s$ and $t$ where $G(s,t)$ contains no pairs of edges.

To prove the `only if' direction we will show that if $\lambda$ satisfies one of the conditions from \Cref{p=2k=3}, but none of the above conditions, then the Schaper number of $\lambda$ is three. If $\lambda$ is triply $p$-singular, with $\lambda_i=\lambda_{i+1}$, $\lambda_j=\lambda_{j+1}$ and  $\lambda_k=\lambda_{k+1}$, then similarly to before these lengths all differ by at least 3 and all other rows have lengths that differ by at least 2 from $\lambda_i,\lambda_j$ and $\lambda_k$. The only contributing terms in the sum $\sum_{\nu} a_\nu [S^\nu:D^{\lambda^r}]$ are those for which $\nu\in\{\lambda'\lambda'',\lambda^{(3)}\}$, where 
$\lambda'_l=\begin{cases}
\lambda_l & l\notin\{i,i+1\}\\
\lambda_l+1 & l=i\\
\lambda_l-1 & l=i+1
\end{cases},$

$\lambda''_l=\begin{cases}
\lambda_l & l\notin\{j,j+1\}\\
\lambda_l+1 & l=j\\
\lambda_l-1 & l=j+1
\end{cases}$
and
$\lambda^{(3)}_l=\begin{cases}
\lambda_l & l\notin\{k,k+1\}\\
\lambda_l+1 & l=k\\
\lambda_l-1 & k=j+1
\end{cases},$
\newline which all appear with coefficient $a_{\lambda'}=a_{\lambda''}=1$.
As before $[S^\nu:D^{\lambda^r}]=1$ if $\nu$ is one of the above, as the $p$-regularisation of each of these $\nu$ is $D^{\lambda^r}$, and thus $\nu_2(\lambda)=3$.

If $\lambda$ is 4-singular, but does not satisfy any of the conditions above, then the rows of the same length are of length 1 and $\lambda$ is not 6-singular so, by \Cref{James}, $\nu_2(\lambda)=3$. 

Let $\lambda$ satisfy property \emph{4.} of \Cref{p=2k=3} but none of the above. If there are two rows of length 3, then by \Cref{James}, $\nu_2(\lambda)=3$, so we may assume $\lambda\in \{(\cdots,k+2,k,k,k-1,k-3,\cdots),(\cdots,k+3,k+1,k,k,k-2,\cdots),(\cdots,k+3,k,k,k,k-3,\cdots)\}$ with $k\ge 4$. In all three cases, just as before, we shall show that the simple module corresponding to the $p$-regularisation of $\lambda$ lies in the 3rd, and therefore top, Schaper layer. 

Let $\lambda = (\cdots,k+2,k,k,k-1,k-3,\cdots)$, then $\lambda^r=(\cdots,k+2,k+1,k,k-2,k-3,\cdots)$. The only $\nu$ contributing to the sum $\sum_{\nu} a_\nu [S^\nu:D^{\lambda^r}]$ are $(\cdots,k+2,k+1,k-1,k-1,k-3,\cdots)$ which appears with coefficient 1, and  $\lambda^r$ itself, which appears with coefficient 2. Both of these have $[S^\nu:D^{\lambda^r}]=1$, as the $p$-regularisation of both $\nu$ and $\lambda^r$ is $\lambda^r$, and hence $\sum_{\nu} a_\nu [S^\nu:D^{\lambda^r}]=3=\nu_2(\lambda)$

Now consider $\lambda = (\cdots,k+3,k,k,k,k-3,\cdots)$, then $\lambda^r=(\cdots,k+3,k+2,k,k-2,k-3,\cdots)$. The only $\nu$ contributing to the sum $\sum_{\nu} a_\nu [S^\nu:D^{\lambda^r}]$ are $(\cdots,k+3,k+2,k-1,k-1,k-3,\cdots)$, $(\cdots,k+3,k+1,k+1,k-2,k-3,\cdots)$ and $\lambda^r$ itself, which all appear with coefficient 1 and have $[S^\nu:D^{\lambda^r}]=1$, as before, so $\sum_{\nu} a_\nu [S^\nu:D^{\lambda^r}]=3=\nu_2(\lambda)$.

Finally, if $\lambda = (\cdots,k+3,k+1,k,k,k-2,\cdots)$, then $\lambda^r=(\cdots,k+3,k+2,k,k-1,k-3,\cdots)$. The only $\nu$ contributing are $(\cdots,k+3,k+1,k+1,k-1,k-3,\cdots)$, with coefficient 1, and $\lambda^r$ itself, with coefficient 2. Again both have $[S^\nu:D^{\lambda^r}]=1$ so $\sum_{\nu} a_\nu [S^\nu:D^{\lambda^r}]=3=\nu_2(\lambda)$.

If $\lambda$ satisfies property \emph{2.} of \Cref{p=2k=3} but none of the above then $\lambda_i=2$ and $\lambda_j\notin \{2,3\}$. If $\lambda_j=1$ then $\nu_2(\lambda)=3$ by \Cref{James}, so we may assume $\lambda=(r,r-1,\dots,m+2,m,m,m-2,\dots,3,2,2,1)$. Suppose further that $m\ne 4$.

We shall construct row equivalent $\lambda$-tableaux $t$ and $u$ such that $16\nmid \langle e_t,e_u\rangle$. We shall choose $t$ to be the initial tableaux, that is the tableaux whose entries are, from left to right and top to bottom, $1,2,3,\dots$. We then choose $u$ to be the unique tableaux which is row equivalent to $t$ and whose rows of unique length have entries in descending order from left to right and whose rows of length $m$ are obtained from $t$ by permuting the other rows that occur as a pair as described below: 
If the pair of rows of length $m$ appearing in $t$ is
$$\ytableausetup{boxsize=2.4em}
\begin{ytableau}
a_1&a_2&a_3&a_4&a_5&a_6&a_7&a_8&\dots& a_{m-1} &a_{m}\\
b_1&b_2&b_3&b_4&b_5&b_6&b_7&b_8&\dots& b_{m-1} &b_{m}\\
\end{ytableau},$$ 

then set the corresponding rows of $u$ to be 

$$\ytableausetup{boxsize=2.4em}
\begin{ytableau}
a_{m}& a_{m-2} &a_{m-1}&\dots&a_8&a_5&a_6&a_3&a_4&a_1&a_2\\
b_{m-1}&b_m&b_{m-3}&\dots&b_6&b_7&b_4&b_5& b_2 &b_3& b_1\\
\end{ytableau},$$

and set the last rows of $u$ to be $$\ytableausetup{boxsize=2.4em}
\begin{ytableau}
n-3&n-4\\
n-2&n-1\\
n
\end{ytableau}.$$  

It is easy to see that any tabloid $\{v\}$ common to $e_t$ and $e_u$ must have $R_i(\{v\})=R_i(t)$ for any row $i$ of unique length with $\mid R_i(t)\mid \ne 1$. For example, the elements occurring first in each row of $t$ occur last in the rows in $u$ except the row of length $m$ where it is the second to last entry. Apart from in this row, these entries can not appear lower in $\{v\}$ than they do in $t$ and so they must appear in the same row. Similarly we see that if $\lambda_l=\lambda_{l+1}$ then $R_l(\{v\})\cup R_{l+1}(\{v\})=R_l(t)\cup R_{l+1}(t)$ and thus  $\langle e_{t},e_{u}\rangle=\langle e_{t'},e_{u'}\rangle\cdot \langle e_{t''},e_{u''}\rangle$, where $t'$ is the tableau consisting of only the pair of rows in $t$ of length $m$ and $t''$ is the tableau consisting of last three rows of $t$, with $u'$ and $u''$ defined similarly. It is easy to see that $\langle e_{t'},e_{u'}\rangle = 2$ and $ \langle e_{t''},e_{u''}\rangle=12$ and thus $\langle e_{t},e_{u}\rangle=24$, which is not divisible by 16.

We may do a similar thing if $m=4$, in which case we may assume $\lambda=(\cdots,7,6,4,4,2,2,1)$. If we set $t$ to be the initial tableau and set $u$ to be the row equivalent tableau with entries in descending order in all rows except the rows of length 4 which we set to 
$$\ytableausetup{boxsize=2em}
\begin{ytableau}
a_{3}& a_{4} &a_{1}&a_2\\
b_4& b_2 &b_3& b_1\\
\end{ytableau},$$ as before. In this case we see that $\langle e_{t},e_{u}\rangle=\langle e_{t'},e_{u'}\rangle$, where 
$$
t'=\ytableausetup{boxsize=2em}
\begin{ytableau}
1&2&3&4\\
5&6&7&8\\
9&10\\
11&12\\
13
\end{ytableau}$$
and 
$$
u'=\ytableausetup{boxsize=2em}
\begin{ytableau}
3&4&1&2\\
8&6&7&5\\
10&9\\
12&11\\
13
\end{ytableau}.$$ This inner product is 8, and hence not divisible by 16, so the Schaper number of $\lambda$ is at most three.

Now suppose $\lambda$ satisfies the final property of \Cref{p=2k=3}. Recall the case where $\lambda$ has two rows of length 2 and one of length 1 was dealt with earlier, so we may assume $\lambda = (\cdots,k+3,k+1,k+1,k-1,k-1,k-3,\cdots)$ and thus $\lambda^r = (\cdots,k+3,k+2,k+1,k-1,k-2,k-3,\cdots)$. The contributing terms are $(\cdots,k+3,k+2,k,k-1,k-1,k-3,\cdots)$, $(\cdots,k+3,k+1,k+1,k,k-2,k-3,\cdots)$ and $\lambda^r$, all with coefficient 1. All of these have $\lambda^r$ as their $p$-regularisation so $[S^\nu:D^{\lambda^r}]=1$ and therefore $\sum_{\nu} a_\nu [S^\nu:D^{\lambda^r}]=3=\nu_2(\lambda)$, completing the proof.
\end{proof}
\newpage

\section{Schaper Numbers for Odd Primes}
The problem of characterising partitions with high Schaper number for odd primes is more difficult. Unlike in \Cref{Fayers}, where there is a nice characterisation for all primes, small primes must be treated separately when characterising partitions with higher Schaper numbers. In this section we will give a necessary list of conditions for partitions to have Schaper number at least three for odd primes. Throughout this section $p$ is assumed to be odd.

\begin{theorem}\label{onlyif}
Let $\lambda \vdash n$ and $p$ be an odd prime, then $\nu_p(\lambda)\ge 3$ only if one of the following conditions hold:
\begin{enumerate}
\item $\lambda$ is triply $p$-singular.
\item There exist $i,j$ with $\{i,\dots, i+2p-2\}\cap\{j,\dots,j+p-1\}=\emptyset$ such that $\lambda_i\le\lambda_{i+2p-2}+1$ and $\lambda_{i+p-1}\ge 2$ and $\lambda_j=\lambda_{j+p-1}$.
\item There exists $i$ with $\lambda_{i}=\lambda_{i+2p-1}\ge 2$.
\item There exists $i$ such that $\lambda_i=\lambda_{i+p-1}=\lambda_{i+p}+1=\lambda_{i+2p-1}+1\ge 3$.
\item There exist $i$ such that $\lambda_i\le\lambda_{i+3p-3}+2$ with $\lambda_{i+2p-2}\ge 2$ and some $p$ consecutive rows between $\lambda_i$ and $\lambda_{i+3p-3}$ have length $k\ge 3$.
\end{enumerate}
\end{theorem}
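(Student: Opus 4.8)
The plan is to prove the contrapositive: assuming $\lambda$ satisfies none of conditions 1--5, I will show $\nu_p(\lambda)\le2$. The governing principle is that $\nu_p(\lambda)$ is the least $p$-adic valuation attained by an inner product $\langle e_s,e_t\rangle$ of polytabloids: a lower bound on $\nu_p(\lambda)$ comes from bounding $\nu_p(\langle e_s,e_t\rangle)$ below for \emph{every} row-equivalent pair $s,t$, as in \Cref{base case} and \Cref{abc}, while the upper bound $\nu_p(\lambda)\le2$ that I need comes from exhibiting a \emph{single} pair with $\nu_p(\langle e_s,e_t\rangle)=2$, just as the proof of \Cref{p=2k=4} exhibits tableaux with $16\nmid\langle e_s,e_t\rangle$ to show $\nu_2(\lambda)\le3$. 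I first dispose of the trivial case: if $\lambda$ satisfies neither condition of \Cref{Fayers} then $\nu_p(\lambda)\le1$ and we are done, so I may assume one of Fayers' two conditions holds and organise the argument by which one.

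When $\lambda$ is doubly $p$-singular it carries two disjoint runs of $p$ equal rows, of lengths $a\ge b$, and failure of 1--5 constrains them sharply. Condition 1 forbids a third run; two runs of equal length $\ge2$ force $2p$ consecutive equal rows and hence condition 3, so for $a\ge2$ the lengths differ; if $a=b+1\ge3$ the runs are adjacent and give condition 4; and failure of condition 2 stops either run extending to a wider near-rectangle that carries a second singularity. If both runs have length $1$ then $\lambda$ has between $2p$ and $3p-1$ rows of length $1$, and \Cref{James} gives $\nu_p(\lambda)=2$ directly, since $\nu_p(z_1!)=2$ throughout that range with the two James bounds coinciding. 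The residual shapes are isolated blocks of distinct lengths, the borderline being $(2^p,1^p)$, which I treat below.

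When $\lambda$ satisfies only the second condition of \Cref{Fayers} it has $2p-1$ consecutive rows of lengths in $\{c,c-1\}$ with $c\ge2$. Here James is decisive whenever its upper bound $\nu_p\!\big(\prod(z_j!)^j\big)$ is at most $2$, which covers the genuinely thin cases; but it overcounts for thick blocks, for example the rectangle $(\ell^{2p-1})$ with $\ell\ge2$ has James upper bound $\ell$ yet Schaper number $2$, and it does not trigger condition 5 because that condition needs a wider window of $3p-2$ rows with drop $\le2$. For every such residual configuration — the thick near-rectangles of the second case and the isolated distinct-length blocks of the first — I would prove $\nu_p(\lambda)\le2$ either by the block construction of the $p=2$ proofs, choosing $s,t$ that differ from the initial tableau only inside the singular blocks so that $\langle e_s,e_t\rangle$ decomposes over the blocks (each isolated block $(\ell^p)$ contributing valuation $\nu_p((\ell^p))=1$, by \Cref{Fayers} and \Cref{James}), or by computing the Schaper sum.

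The Schaper-sum route uses \Cref{Schaper's sum formula}: as $[S^\lambda:D^{\lambda^r}]=1$ for the $p$-regularisation $\lambda^r$ and $D^{\lambda^r}$ lies in a single Schaper layer, the sum $\sum_\nu a_\nu[S^\nu:D^{\lambda^r}]$ equals the index of that layer, so proving it equals $2$ places $D^{\lambda^r}$ in the second layer and, with the lower bound $\nu_p(\lambda)\ge2$ from \Cref{Fayers}, forces $\nu_p(\lambda)=2$. This last step is the main obstacle and the point of departure from $p=2$. For $p=2$ the regularisation of a pair $(c,c)$ is a single box-move $(c+1,c-1)$, so only two partitions contribute, each with coefficient $1$; for odd $p$ the regularisation of a thick block is a genuine staircase, many $\nu$ with $\lambda\lhd\nu\unlhd\lambda^r$ contribute, and their coefficients $\alpha_{(g,h,\nu)}=\nu_p(|g|)(-1)^{l(g)+l(h)+1}$ need not be $\pm1$, since hook lengths divisible by $p$ may be long. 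Showing these contributions sum to exactly $2$ — and, on the tableau side, handling adjacent blocks such as $(2^p)$ above $(1^p)$, whose shared first column prevents a clean factorisation of $\langle e_s,e_t\rangle$ — is where I expect the bulk of the work to lie.
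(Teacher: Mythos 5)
Your overall strategy is exactly the paper's: argue the contrapositive, assume one of the two conditions of \Cref{Fayers} holds, split according to which, and establish $\nu_p(\lambda)=2$ either by exhibiting a single pair $s,t$ with $\nu_p(\langle e_s,e_t\rangle)=2$ or by using \Cref{Schaper's sum formula} to place $D^{\lambda^r}$ in the second layer. Your preliminary reductions are also sound: two disjoint $p$-singularities of equal length $\ge 2$ do force condition 3, lengths differing by $1$ with the larger $\ge 3$ do force condition 4, and the case of $2p$ to $3p-1$ rows of length $1$ does follow from \Cref{James}. The problem is that everything after these reductions is deferred rather than done. You write ``I would prove $\nu_p(\lambda)\le 2$ either by the block construction \dots or by computing the Schaper sum'' and flag the residual cases as ``where I expect the bulk of the work to lie'' --- but those residual cases \emph{are} the theorem. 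Concretely, you have not: (i) produced the explicit tableaux for $\lambda=(\cdots,2^{p+i_2},1^{p+i_1})$ with $p^3\nmid\langle e_t,e_u\rangle$ (the paper reverses all rows except $i_2+1$ of the rows of length $2$; your own observation that the shared first column blocks a clean factorisation shows why some such ad hoc construction is unavoidable here); (ii) handled the case where the two singularities differ by $\ge 2$ and one has length $1$, which in the paper requires a second explicit construction together with a careful count of common tabloids; (iii) verified, in the cases where the sum formula is used, that the only contributing $\nu$ with $\lambda\lhd\nu\unlhd\lambda^r$ come from one or two specific rim hooks, each with coefficient $+1$.

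Point (iii) deserves emphasis because you correctly identify it as the departure from $p=2$ but then stop. The resolution in the paper is not a brute-force summation over a ``staircase'' of many $\nu$: one first shows that, under the failure of conditions 1--5, every box of $[\lambda]$ moves at most one step up its $p$-ladder when passing to $\lambda^r$ (otherwise one would have $2p-1$ rows differing by $2$ or $3p-2$ rows differing by $3$, triggering condition 3 or 5). This pins down the interval $\lambda\lhd\nu\unlhd\lambda^r$ tightly enough that only the $p$-hooks footed at the removable box of each singularity (respectively, one $p$-hook and one $2p$-hook in the second Fayers case) can contribute, and a leg-length computation gives $a_\nu=+1$; the identity $\nu^r=\lambda^r$ then gives $[S^\nu:D^{\lambda^r}]=1$ and the sum is $2$. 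Without this ladder argument your worry that ``many $\nu$ contribute with coefficients that need not be $\pm1$'' is not a difficulty you have overcome but an open hole in the proof. As it stands the proposal is a correct roadmap that coincides with the paper's route, but it omits all of the verifications that constitute the proof.
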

\begin{proof}

We shall show that if $\lambda$ satisfies one of the conditions of \Cref{Fayers}, but not any of the above then $\nu_p(\lambda)=2$. 

First suppose $\lambda$ is doubly $p$-singular. If the $p$-singularities are of the same length then this length must be 1, and so we are done by \Cref{James}. If they differ in length by 1 then $\lambda=(\cdots,4^{i_4},3^{i_3},2^{p+i_2},1^{p+i_1})$  with $i_j\le p-1$ for all $j$, $i_1+i_2\le p-2$ and $i_2+i_3\le p-2$ . In this case we see that $p^3\nmid \langle e_t,e_u\rangle$ where $t$ is the initial $\lambda$-tableau and $u$ is the tableau obtained from $t$ by reversing the entries in all rows except $i_2+1$ of the rows of length 2.

Suppose the lengths of these two singularities differ by 2 or more and that neither of them are of length 1, We will now show that the module $D^{\lambda^r}$ appears in the second Schaper layer of $S^\lambda$. Observe that the when we take the $p$-regularisation of such a partition boxes can only move into the next position in the $p$ ladder; that is to say a box is either fixed or it moves up $p$ rows and into the column to its right. This is because if it were able to move further then we must have $2p-1$ rows who differ by 2, or $3p-2$ rows who differ by 3. 

Again, $[S^\lambda:D^{\lambda^r}]=1$ so the value of $\sum_{i=1} [S^\lambda_{(i)}:D^{\lambda^r}]=\sum_{\nu} a_\nu [S^\nu:D^{\lambda^r}]$ is the number of the Schaper layer in which $D^{\lambda^r}$ appears. The term $[S^\nu:D^{\lambda^r}]$ can only contribute if $\nu$ is obtained from $\lambda$ by unwrapping a single $mp$-hook and wrapping it further up the Young diagram, and if $\lambda \lhd \nu \unlhd \lambda^r$. 

Any hook which contains boxes not in one of the two singularities would result in a $\nu$ which is not dominated by $\lambda^r$ so the only options are the two $p$-hooks which have their foot in the removable box of a $p$-singularity. Such a hook must then be wrapped in a way so that all of its boxes are placed in the same column that they appear in $[\lambda]$, or the column immediately to the right. The leg length of the hook as it appears in $[\lambda]$ is $p$ and in $[\lambda^r]$ t is $p-1$, so the coefficient $a_\nu=+1$. Also, as $\nu^r=\lambda^r$ we have $[S^\nu:D^{\lambda^r}]=1$, and hence $\sum_{i=1} [S^\lambda_{(i)}:D^{\lambda^r}]=\sum_{\nu} a_\nu [S^\nu:D^{\lambda^r}]=2$, as claimed.

Now suppose that the lengths of these two singularities differ by 2 or more and that there is a  $p$-singularity of length 1. We shall construct $\lambda$-tableaux $t$ and $u$ such that the inner product between the polytabloids $e_t$ and $e_u$ is divisible by $p^2$ but not $p^3$. We may assume that $\lambda=(\cdots,k+1^{i_{k+1}},k^{p+i_k},k-1^{i_{k-1}},\cdots,2^{i_2},1^{p+i_1})$, with $i_{k+1}+i_k+i_{k-1} < p-2$ and $i_j < p$ for all $j$. As before we choose $t$ to be the initial $\lambda$-tableau and $u$ to be the tableau row equivalent to $t$ which is obtained by reversing the order of entries in all of the rows except for $p+i_k-\text{max}\{i_{k+1},i_{k-1}\}$ of the rows of length $k$. Of these remaining rows, we set $p-\text{max}\{i_{k+1},i_{k-1}\}-1$ of these to 
$$\begin{ytableau}
\cdots & a_5 & a_6 & a_3& a_4 &a_1&a_2\\
\end{ytableau}$$
and the other $i_{k}+1$ rows to 
$$\begin{ytableau}
\cdots &a_7 & a_4 & a_5 & a_2& a_3 &a_1\\
\end{ytableau},$$
where 
$$\begin{ytableau}
a_1 & a_2 & a_3 & a_4&\cdots & \cdots & a_k\\
\end{ytableau}$$
is the corresponding row of $t$.
 
First observe that any entry that appears in a row of length $i$ for $i\notin \{k-1,k,k+1\}$ of a tabloid common to $e_t$ and $e_u$ must also appear in a row of that length in $t$ and $u$. This allows us to deduce that $\langle e_t, e_u\rangle = \langle e_{t'},e_{u'}\rangle \langle e_{t''}, e_{u''}\rangle$ where $t'$ and $u'$ are the tableau whose rows are the same as the rows of $t$ and $u$ whose length is not $k-1$, $k$ or $k+1$, and $t''$ and $u''$ are the $(k+1^i_{k+1},k^{p+i_{k}},k-1^i_{k-1})$-tableaux whose rows are the same as the corresponding rows of $t$ and $u$ respectively.
Observe also that $\nu_p( \langle e_{t'},e_{u'}\rangle)=1$ so to complete the proof it remains to prove that $p^2\nmid \langle e_{t''},e_{u''}\rangle$.

To see this consider the tableaux 
$$t''=\ytableausetup{boxsize=2em}
\begin{ytableau}
\Tilde{a_1}&\Tilde{a_2}&\Tilde{a_3}&\cdots&\cdots&\cdots & \cdots &\Tilde{a_{k+1}}\\
\Tilde{x_1}&\Tilde{x_2}&\Tilde{x_3}&\cdots&\cdots&\cdots &\Tilde{x_{k}}\\
\Tilde{y_1}&\Tilde{y_2}&\Tilde{y_3}&\cdots&\cdots&\cdots &\Tilde{y_{k}}\\
\Tilde{z_1}&\Tilde{z_2}&\Tilde{z_3}&\cdots&\cdots &\cdots&\Tilde{z_{k}}\\
\Tilde{c_1}&\Tilde{c_2}&\Tilde{c_3}&\cdots&\cdots&\Tilde{c_{k-1}}\\
\end{ytableau}$$
and 
$$u''=
\ytableausetup{boxsize=2em}\begin{ytableau}
\Tilde{a_{k+1}}&\Tilde{a_k}&\cdots&\cdots&\cdots&\cdots & \Tilde{a_2} &\Tilde{a_1}\\
\cdots&\cdots&\cdots &\Tilde{x_3}&\Tilde{x_4}&\Tilde{x_1}&\Tilde{x_{2}}\\
\cdots&\cdots&\cdots &\Tilde{y_5}&\Tilde{y_2}&\Tilde{y_3}&\Tilde{y_{1}}\\
\Tilde{z_{k}}&\cdots&\cdots&\Tilde{z_4}&\Tilde{z_3}&\Tilde{z_2}&\Tilde{z_1}\\
\Tilde{c_{k-1}}&\cdots&\cdots &\Tilde{c_3}&\Tilde{c_2}&\Tilde{c_1}\\
\end{ytableau},$$
where the $\Tilde{a_i}, \Tilde{x_i}, \Tilde{y_i}, \Tilde{z_i}$ and $\Tilde{c_i}$ are represent columns of length $i_{k+1}, p+i_k-\text{max}\{i_{k+1},i_{k-1}\}, i_k+1, \text{max}\{i_{k+1},i_{k-1}\}$ and $i_{k-1}$ respectively. Observe that for any tabloid $\{T\}$ common to $e_{t''}$ and $e_{u''}$, the permutations required to make $t''$ and $u''$ row equivalent to $T$ have the same number of transpositions and therefore the same sign. This means that $\langle e_{t''}, e_{u''}\rangle$ is the number of tabloids $\{T\}$ common to $e_{t''}$ and $e_{u''}$. We shall count such tabloids by constructing tableau $U$ which are column equivalent to $u''$ and row equivalent to $T$. Observe that once we have chosen which $p-\text{max}\{i_{k+1},i_{k-1}\}$ of the rows of length $k$ in $U$ have entries in their last box which come from the second column of $t''$ (of which there are ${p+i_k \choose p-\text{max}\{i_{k+1},i_{k-1}\}}$ possible choices, a number divisible by $p$) then $U$ is chosen by choosing the order in which entries in the other columns appear. By considering that $U$ must be row equivalent to some tableau which is column equivalent to $t''$ we observe that we are only choosing the order of either $i_{k+1}+p-\text{max}\{i_{k+1},i_{k-1}\}-1$, $i_{k+1}+i_{k}+1$, $p-\text{max}\{i_{k+1},i_{k-1}\}-1+\text{max}\{i_{k+1},i_{k-1}\}$, $p-\text{max}\{i_{k+1},i_{k-1}\}-1+i_{k-1}$,$i_{k}+1+\text{max}\{i_{k+1},i_{k-1}\}$ or $i_{k}+1+i_{k-1}$ elements. The number of possible choices here is the product of the factorials of these numbers, which is not divisible by $p$. We conclude that $p^2\nmid \langle e_{t''},e_{u''}\rangle$ and thus $\nu_p ( \langle e_{t},e_{u}\rangle) = 2$, as required. 

Now suppose that $\lambda$ satisfies the other condition of \Cref{Fayers}, but not any of the above, that is there exist $i$ such that $\lambda_i\le\lambda_{i+2p-2}+1$ and $\lambda_{i+p-1}\ge 2$. As before we shall show that the $D^{\lambda^r}$ appears in the second layer, as in the proof of \Cref{p=2k=3}. 

We may assume that $$\lambda=(\cdots,k+1^{i_{k+1}},k^{p+i_k},k-1^{i_{k-1}},k-2^{i_{k-2}},\cdots,2^{i_2},1^{i_1}),$$ 

with $i_{k+1}+i_{k} < p-1$, $i_{k}+i_{k-1}\ge p-1$, and not satisfying any of the conditions of \Cref{onlyif}, or that 
$$\lambda=(\cdots,k+1^{i_{k+1}},k^{i_k},k-1^{p+i_{k-1}},k-2^{i_{k-2}},\cdots,2^{i_2},1^{i_1})$$
with $i_{k}+i_{k-1}\ge p-1$, and not satisfying any of the conditions of \Cref{onlyif}. In the first case the $p$-regularisation of $\lambda$ is 
\begin{align*}
\lambda^r=(\cdots &,k+1^{i_{k+1}+i_k+1},k^{i_{k-1}}, k-1^{2p-i_{k}-i_{k+1}-3},\\
&  \quad k-2^{i_{k-2}+i_{k+1}+i_k+2-p},k-3^{i_{k-3}},\cdots,2^{i_2},1^{i_1}),\\
\end{align*}
  while in the second it is 
\begin{align*}
\lambda^r=(\cdots &,k+1^{i_{k+1}+i_k+i_{k-1}+2-p},k^{2p-i_k-i_{k-1}-3},\\
&k-1^{i_k+i_{k-1}+i_{k-2}+2-p},k-2^{2p-3-i_{k-1}-i_{k-2}},k-3^{i_{k-1}+i_{k-2}+i_{k-3}},\cdots),
\end{align*}
 if $i_{k-1}+i_{k-2}\ge p-1$ and 
\begin{align*}
\lambda^r=(\cdots,&k+1^{i_{k+1}+i_k+i_{k-1}+2-p},k^{2p-i_{k}-i_{k-1}-3},\\
&k-1^{i_{k-1}},
k-2^{i_{k-1}+i_{k-2}+1},k-3^{i_{k-3}},\cdots)
\end{align*} otherwise.
Observe that in each of these cases the only $\mu$ that can contribute to the sum in \Cref{Schaper's sum formula} are those $\mu$ which are obtained from $\lambda$ by unwrapping an $mp$ hook and wrapping it back on higher up the diagram in such a way that $\lambda\lhd \mu \unlhd \lambda^r$. Observe that there are only two such $mp$ hooks. One is the $p$-hook whose foot is in the row of the same length as the $p$-singularity, and the other is a $2p$ hook. There is a unique way that each of these can be wrapped and each of these has $p$-regularisation $\lambda^r$, hence each will contribute one to the sum, and thus $\sum_{i=1} [S^\lambda_{(i)}:D^{\lambda^r}]=2$ and $D^{\lambda^r}$ appears in the second layer.
\end{proof}

We shall now investigate which of these conditions are sufficient for $\nu_p(\lambda)\ge 3$, for which we make the following conjecture:

\begin{conj}
Let $\lambda\vdash n$ satisfy one of the conditions of \Cref{onlyif}, then $\nu_p(\lambda)\ge 3$. Thus the `only if' in \Cref{onlyif} can be replaced by `if and only if' giving a complete characterisation of partitions with Schaper number at least three. 
\end{conj}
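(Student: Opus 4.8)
The plan is to treat the five conditions of \Cref{onlyif} in turn and show that each forces $\nu_p(\lambda)\ge 3$. Conditions \emph{1.} and \emph{2.} require nothing new: they are verbatim the two hypotheses of \Cref{firstconditions}, which already delivers $\nu_p(\lambda)\ge 3$. The content of the conjecture is therefore conditions \emph{3.}, \emph{4.} and \emph{5.}, and for each I would use the reduction scheme of the paper: peel off the block of consecutive rows witnessing the condition with \Cref{(lambda*mu)} (the rows above and below contribute a non-negative amount), shrink the row lengths with \Cref{columnremoval}, and collapse the resulting staircase block to a rectangle with \Cref{abc}. This localises everything to a short list of explicit building-block lower bounds $\nu_p(B)\ge 3$, which must then be established uniformly in the odd prime $p$, rather than by the machine checks available at $p=2$.

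For the building blocks the engine is \Cref{p edges} sharpened by a symmetry remark: if two columns of $\lambda$ have equal height, interchanging them fixes each polytabloid (both the tabloid and the column stabiliser are unchanged), so when estimating $\nu_p(\langle e_s,e_t\rangle)$ one may relabel equal-height columns to place any heavy edge-bundle onto the first columns $s_1,t_1$. I would do condition \emph{3.} first as the model case. Its block reduces to $(2^{2p})$, where both columns have height $2p$; since the $2p$ entries of the first column of $s$ meet only two columns of $t$, one bundle has multiplicity $m\ge p$. Relabelling it onto $(s_1,t_1)$ and invoking \Cref{p edges} with $\hat\lambda=(1^{2p})$ gives $m!\,p^{\nu_p((2p)!)}\mid\langle e_s,e_t\rangle$; as $\nu_p((2p)!)=2$ and $\nu_p(m!)\ge 1$, we get $p^3\mid\langle e_s,e_t\rangle$ for all $s,t$, so $\nu_p((2^{2p}))\ge 3$. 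Condition \emph{4.}, with block $(x^p,(x-1)^p)$, $x\ge 3$, and condition \emph{5.}, whose blocks span three consecutive row-lengths over $3p-2$ rows, I would reduce via \Cref{abc} to the rectangles $(2^{2p})$ and $(2^{3p-2})$ respectively; here $\nu_p((2^{3p-2}))\ge 3$ follows by the same pigeonhole, the first column of $s$ now carrying $3p-2>2p$ entries into two columns of $t$ and again forcing $m\ge p$.

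The hard part will be condition \emph{5.}, and in particular its boundary blocks such as $(3^p,2^{2p-2})$. These are exactly the partitions that are \emph{one row too short} to hold a condition-\emph{2.} configuration of \Cref{Fayers} alongside a disjoint $p$-singularity (they fill only $3p-2$ rows, whereas \Cref{firstconditions} needs $3p-1$), which is precisely why they slip past the earlier results and must be confronted directly. For such a block the single-column form of \Cref{p edges} is genuinely too weak: the adversarial $s,t$ can push $p$ of the first-column entries of $s$ into the \emph{short} third column, which cannot be relabelled onto $t_1$, leaving only multiplicity $p-1<p$ on the two tall columns, so the pigeonhole, \Cref{James}, and a single step of column removal all stall at $\nu_p\ge 2$. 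I expect the resolution to require either an iterated column-removal estimate tracking the $p$-adic valuation through two columns at once, or a dedicated signed-colouring computation in the style of the proof of \Cref{abc} that keeps the short column in play rather than discarding it. Carrying either out so that $\nu_p\ge 3$ holds simultaneously for every odd $p$ is, I believe, the crux on which the conjecture stands or falls.
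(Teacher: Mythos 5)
First, be aware of what you are up against: the statement is labelled a \emph{conjecture}, and the paper itself has no complete proof of it. Its partial progress (\Cref{2^2p,3^p2^p,321432} and the closing Remark) leaves exactly one family open, namely the blocks $(5^a,4^b,3^c)$ with $a+b+c=3p-2$ and $a,b<p$. Against that benchmark, your treatment of conditions \emph{1.}--\emph{4.} is sound. Conditions \emph{1.} and \emph{2.} are indeed just \Cref{firstconditions}; your pigeonhole on $(2^{2p})$ for condition \emph{3.} is precisely the paper's proof of \Cref{2^2p} (your column-swapping justification of the ``without loss of generality'' is a welcome addition); and your route to condition \emph{4.} --- collapse $(3^p,2^p)$ via \Cref{abc} to $(2^{2p})$, using $\nu_p\bigl(\binom{2p}{p}\bigr)=0$ and $c=0$ --- is correct and noticeably shorter than the paper's direct signed-colouring proof of \Cref{3^p2^p}.

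The gaps are in condition \emph{5.}, and there are two. First, your reduction target is quantitatively too weak: \Cref{abc} does not transport a bound on $\nu_p((2^{3p-2}))$ to the block intact, because it subtracts $\nu_p\bigl(\binom{a+b+c}{a,b,c}\bigr)+\nu_p(c!)$. For the block $(3^p,2^{p-1},1^{p-1})$, which satisfies condition \emph{5.}, your bound degrades to $3-\nu_p\bigl(\binom{3p-2}{p,p-1,p-1}\bigr)=3-1=2$. This is why the paper's \Cref{321432} routes condition \emph{5.} through the strictly stronger claim $\nu_p((3^{3p-2}))\ge 4$, obtained by iterating your pigeonhole through two columns via \Cref{p edges} --- exactly the ``iterated column-removal estimate tracking the valuation through two columns'' that you defer to the end; it is needed already here, not only for the boundary blocks. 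Second, you have misidentified the residual hard case. The block $(3^p,2^{2p-2})$ that you single out in fact falls to \Cref{columnremoval} followed by \Cref{base case}, since $\binom{3p-2}{p}$ is prime to $p$ and $\nu_p((2^{3p-2}))\ge3$; attacking it by a single direct pigeonhole was simply the wrong tool. The case that genuinely resists every tool in the paper is $(5^a,4^b,3^c)$ with $a,b<p$ and hence $c\ge p$: there the correction $\nu_p(c!)\ge1$ in \Cref{abc} exactly cancels the gain from the rectangle, and the pigeonhole on the first column of $(4^{3p-2})$ yields only $m\ge\lceil(3p-2)/4\rceil<p$, so \Cref{p edges} contributes no extra factor of $p$. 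Your proposal, like the paper, stalls precisely on this family; as written it is an honest programme with two repairable miscalibrations, not a proof.
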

\begin{lemma}\label{2^2p}
Let $\lambda\vdash n$ and suppose there exists an $i$ with $\lambda_{i}=\lambda_{i+2p-1}\ge 2$, then $\nu_p(\lambda)\ge 3$. 
\end{lemma}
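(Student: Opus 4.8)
The statement to prove is \Cref{2^2p}: if $\lambda$ has two rows of the same length $k\ge 2$ which are exactly $2p-1$ rows apart (i.e.\ $\lambda_i=\lambda_{i+2p-1}\ge 2$), then $\nu_p(\lambda)\ge 3$. The guiding strategy mirrors the reductions used throughout the paper: by \Cref{columnremoval} and \Cref{(lambda*mu)}, together with the block-detaching Lemmas \Cref{base case} and \Cref{abc}, it should suffice to establish the Schaper number of a single small ``building block'' partition and then transport the bound to the general $\lambda$. So the first thing I would do is isolate the relevant minimal configuration. Since the two equal rows of length $k$ bracket $2p-1$ other rows whose lengths lie weakly between them, and since only the top column contributes edges between $s_1$ and $t_1$, I expect that after column removal one can reduce to a partition built from rows of lengths in $\{k-1,k,k+1\}$ (the ladder-neighbours), and ultimately to a rectangle $(2^{2p+1})$ or a near-rectangle of height $2p+1$.

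\textbf{Key steps in order.} First I would reduce the width: repeatedly apply \Cref{columnremoval} to strip off leading columns, observing that this does not decrease the Schaper number, until the two equal rows have length $2$ rather than $k$; this is exactly what condition $\lambda_i=\lambda_{i+2p-1}\ge 2$ allows, so the reduced partition still has two rows of length $2$ that are $2p-1$ rows apart. Second, I would use \Cref{(lambda*mu)} to detach the irrelevant rows and concentrate attention on the block of $2p+1$ rows running from row $i$ to row $i+2p-1$, reducing to the concrete partition $(2^{2p+1})$ (or, if the intermediate rows force length-$1$ parts, a partition with a $(2^2)$-pair separated by $2p-1$ rows of length $1$). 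Third, and this is the crux, I would prove $\nu_p((2^{2p+1}))\ge 3$ directly via the graph-theoretic machinery: take row-equivalent $(2^{2p+1})$-tableaux $s,t$ and analyse $G(s,t)$, showing that the signed sum $\langle e_s,e_t\rangle=\sum_{C\in A(G)}(-1)^C$ is divisible by $p^3$. The natural route is to combine \Cref{James}, which gives $\nu_p((1^{2p+1}))=\nu_p((2p+1)!)=2$ (since $2p+1<3p$ contributes exactly two factors of $p$ from the multiples $p$ and $2p$), with \Cref{p edges}: any $G(s,t)$ on a two-column shape of height $2p+1$ must contain a repeated edge between two vertices (there being $2p+1$ edges but only $2p+1$ possible source rows forces a collision once two columns are present), and \Cref{p edges} then upgrades the $p^{\nu_p(\hat\lambda)}=p^2$ divisibility coming from $\hat\lambda=(1^{2p+1})$ by an extra factor, yielding $p^3\mid\langle e_s,e_t\rangle$ and hence $\nu_p((2^{2p+1}))\ge 3$.

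\textbf{Main obstacle.} The delicate point is the third step: showing that every admissible graph $G(s,t)$ for a two-column height-$(2p+1)$ shape genuinely forces a multiple edge (so that \Cref{p edges} applies with $m\ge 2$), rather than merely that orthogonality holds in the edge-free case. Unlike the rectangles treated in \Cref{4422}, where a pigeonhole count on column assignments made the repeated edge automatic, here I must verify the analogous counting argument for height $2p+1$ and confirm that the \emph{absence} of a repeated edge forces $\langle e_s,e_t\rangle=0$ outright (so those configurations contribute nothing and can be ignored). I would handle this by the same case split as in \Cref{333}: if some pair of vertices carries two edges, \Cref{p edges} with $\hat\lambda=(1^{2p+1})$ gives $2!\,p^2\mid\langle e_s,e_t\rangle$, and the extra valuation of $2$ or the reduction $\nu_p((1^{2p+1}))=2$ supplies the third power of $p$; if no pair carries two edges, one shows by an explicit colour-permutation/sign argument that $e_s\perp e_t$. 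Confirming that these two cases are exhaustive and that the edge-free case is always orthogonal for $p$ odd is where the real work lies, and it is the step most likely to require a careful combinatorial lemma rather than a direct appeal to the quoted results.
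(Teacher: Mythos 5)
Your overall strategy --- strip columns and detach rows to reduce to a two-column rectangle, then combine \Cref{James} with \Cref{p edges} --- is the same as the paper's, but the crucial third step as you describe it does not work for odd $p$ (and this section assumes $p$ odd throughout). \Cref{p edges} says that $m$ edges from $s_1$ to $t_1$ force $m!\,p^{\nu_p(\hat\lambda)}$ to divide $\langle e_s,e_t\rangle$; with $\hat\lambda$ a column this gives $p^2$ from the second factor, but a mere \emph{repeated} edge ($m=2$) contributes $\nu_p(2!)=0$ extra powers of $p$ when $p$ is odd, so ``repeated edge plus $p^2$'' yields only $p^2$, not $p^3$. The pair-of-edges trick you are importing from \Cref{333} and \Cref{4422} is special to $p=2$. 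Note also that the configuration forced by $\lambda_i=\lambda_{i+2p-1}\ge 2$ is $2p$ consecutive rows (rows $i$ through $i+2p-1$ inclusive, not $2p+1$ of them), all automatically of equal length since $\lambda$ is a partition; so there is no ``lengths in $\{k-1,k,k+1\}$'' case, and the reduction is to the rectangle $(2^{2p})$.

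What actually closes the gap is that for this rectangle the pigeonhole is much stronger than ``some multiple edge exists'': the vertex $s_1$ has degree $2p$ and only two possible neighbours $t_1,t_2$, so after possibly interchanging the columns of $t$ (which changes $e_t$ only by a sign) there are $m\ge p$ parallel edges from $s_1$ to $t_1$. Then $\nu_p(m!)\ge 1$, and \Cref{p edges} together with $\nu_p((1^{2p}))=\nu_p((2p)!)=2$ from \Cref{James} gives $p^{3}\mid\langle e_s,e_t\rangle$. In particular the orthogonality analysis you flag as the ``real work'' never arises: every $G(s,t)$ for $(2^{2p})$ has at least $p$ edges between some pair of column vertices, so there is no edge-free case to dispose of.
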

\begin{proof}
By \Cref{(lambda*mu),columnremoval} is suffices to show $\nu_p(\lambda)\ge 3$ for $\lambda=(2^{2p})$. Let $s$ and $t$ be row equivalent $(2^{2p})$-tableaux and let $G=G(s,t)$ as above. Observe that, without loss of generality, there are $a \ge p$ edges from $s_1$ to $t_1$, so $p^3\mid \langle e_s, e_t\rangle$ by \Cref{p edges}
\end{proof}
\begin{lemma}\label{3^p2^p}
Let $\lambda\vdash n$ and suppose there exists $i$ such that $\lambda_i=\lambda_{i+p-1}=\lambda_{i+p}+1=\lambda_{i+2p-1}+1\ge 3$. Then $\nu_p(\lambda)\ge 3$.
\end{lemma}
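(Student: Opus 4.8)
The plan is to follow the reduction strategy of \Cref{2^2p}: peel off irrelevant rows with \Cref{(lambda*mu)}, strip columns with \Cref{columnremoval} to land on a single minimal shape, and then bound every polytabloid inner product for that shape using \Cref{p edges}.

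Write $k := \lambda_i \ge 3$, so the hypothesis says that $\lambda$ contains $p$ consecutive rows of length $k$ immediately followed by $p$ consecutive rows of length $k-1$. First I would isolate this block: writing $\lambda$ as a star product of the rows lying above the block, the block $(k^p,(k-1)^p)$, and the rows lying below it, repeated use of \Cref{(lambda*mu)} together with the non-negativity of Schaper numbers gives $\nu_p(\lambda) \ge \nu_p((k^p,(k-1)^p))$. Deleting one column at a time and applying \Cref{columnremoval} then yields the chain $\nu_p((k^p,(k-1)^p)) \ge \nu_p(((k-1)^p,(k-2)^p)) \ge \dots \ge \nu_p((3^p,2^p))$, each shape being the first-column deletion of its predecessor; here $k \ge 3$ is exactly what is needed to reach $(3^p,2^p)$. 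Thus it suffices to prove $\nu_p((3^p,2^p)) \ge 3$.

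For this base case I would show that $p^3 \mid \langle e_s, e_t\rangle$ for every pair of row-equivalent $(3^p,2^p)$-tableaux $s,t$, which forces $\nu_p((3^p,2^p)) \ge 3$ exactly as in \Cref{2^2p}. Since $\hat\lambda = (2^p,1^p)$ here, \Cref{p edges} tells us $\langle e_s, e_t\rangle$ is divisible by $m!\,p^{\nu_p((2^p,1^p))}$, where $m$ counts the edges from $s_1$ to $t_1$ in $G(s,t)$. The key input is therefore the estimate $\nu_p((2^p,1^p)) \ge 3$: applying \Cref{base case} with $x = 2$ and $a = b = p$ gives $\nu_p((2^p,1^p)) \ge \nu_p((2^{2p})) - \nu_p({2p \choose p})$, and since $\nu_p((2^{2p})) \ge 3$ was established inside the proof of \Cref{2^2p}, while ${2p \choose p}$ is prime to $p$ for odd $p$ (there is no carry when adding $p+p$ in base $p$), we get $\nu_p((2^p,1^p)) \ge 3$. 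Feeding this back, $p^3 \mid m!\,p^{\nu_p((2^p,1^p))} \mid \langle e_s, e_t\rangle$ for all $s,t$ irrespective of $m$, completing the base case.

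The reductions are routine once the minimal shape $(3^p,2^p)$ is identified, so the real content, and the main obstacle, is the divisibility bound $\nu_p((2^p,1^p)) \ge 3$. Its only genuinely prime-sensitive ingredient is the vanishing of $\nu_p({2p \choose p})$, which holds precisely because $p$ is odd and fails at $p = 2$; this is consistent with the fact that the small prime must be handled separately. A pleasant feature is that, because $\nu_p((2^p,1^p))$ already reaches $3$, no lower bound on the edge count $m$ is needed, unlike in the proof of \Cref{2^2p}.
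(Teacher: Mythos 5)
Your reduction to the single shape $(3^p,2^p)$ via \Cref{(lambda*mu)} and \Cref{columnremoval} matches the paper. The base case is where the argument breaks: the inequality $\nu_p((2^p,1^p))\ge 3$ is false. In fact $\nu_p((2^p,1^p))=2$, and the paper itself relies on this in the proof of \Cref{onlyif} (the doubly $p$-singular case with singularities of lengths $2$ and $1$ is exactly $(2^p,1^p)$ when all $i_j=0$, and there the paper exhibits row-equivalent tableaux $t,u$ with $p^3\nmid\langle e_t,e_u\rangle$). You can also check it directly: for row-equivalent $(2^a,1^b)$-tableaux $s,t$, if $m$ is the number of entries lying in the second column of both, then counting admissible colourings of $G(s,t)$ gives $|\langle e_s,e_t\rangle|=a!\,(a-m)!\,(b+m)!$; with $a=b=p$ and $1\le m\le p-1$ (achievable, e.g.\ by reversing a single row of length $2$ of the initial tableau) this has $p$-adic valuation exactly $2$. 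Your derivation of the false bound does formally follow from \Cref{base case} as printed, but that statement cannot be correct as written: in its proof the graph $G_\sigma$ obtained by deleting the last-column vertices $s_x,t_x$ has $x-1$ columns each of degree $a+b$, so it is the graph of a pair of $((x-1)^{a+b})$-tableaux, not $(x^{a+b})$-tableaux, and for $(2^p,1^p)$ the bound one actually obtains is $\nu_p((1^{2p}))-\nu_p\binom{2p}{p}=2$. With the correct value $\nu_p((2^p,1^p))=2$, \Cref{p edges} yields only $m!\,p^2\mid\langle e_s,e_t\rangle$, which fails to give $p^3$ whenever $m<p$; and $m=0$ genuinely occurs, for instance when $t$ is obtained from $s$ by cyclically shifting every row.

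The paper avoids this by working at the third column of $(3^p,2^p)$ rather than the first. If $G(s,t)$ has no edge from $s_3$ to $t_3$, deleting those two vertices and reattaching the dangling edges produces graphs of $(2^{2p})$-tableaux --- and $\nu_p((2^{2p}))\ge 3$ genuinely holds by \Cref{2^2p} --- at the cost only of the $p$-unit $\binom{2p}{p}$. If there is such an edge, deleting it reduces to $(3^{p-1},2^{p+1})$, which has Schaper number at least $2$ by \Cref{Fayers}, and the action of $\mathcal{S}_p$ on the colours $c_1,\dots,c_p$ supplies the missing factor of $p$. Some case split on the third column of this kind seems unavoidable; the shortcut through the first column and $(2^p,1^p)$ cannot be repaired.
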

\begin{proof}
Again, by \Cref{(lambda*mu),columnremoval} we are reduced to showing $\nu_p((3^p,2^p))\ge 3$.
Let $\lambda=(3^p,2^p)$ let $s$ and $t$ be row equivalent $\lambda$-tableaux. Consider the graph $G=G(s,t)$. If this graph contains no edges from $s_3$ to $t_3$ then by deleting these two vertices we obtain the graph $G_\sigma$ for some $s_\sigma,t_\sigma$ row equivalent $(2^{2p})$-tableaux. There is a one-to-one correspondence between admissible colourings $C$ of $G$ and pairs $(\sigma,C')$ where $\sigma\in \mathcal{S}_p$ and $C'$ is an admissible colouring of $G_\sigma$ where the edges $e_1',\dots,e_p'$ have colours $c_1,\dots,c_p$ in some order. Examining permutations induced by the colourings shows 
$$(-1)^C = (-1)^p(-1)^{C'} ,$$
thus
$$ (-1)^{\pi_{st}}\langle e_s, e_t\rangle = (-1)^p\sum_{\sigma\in\mathcal{S}_r}\sum_{C}(-1)^C,$$ where the second sum is over the admissible colourings of $G_\sigma$ where the edges $e_1',\dots,e_p'$ have colours $c_1,\dots,c_p$.
We define the set of respectable colourings of $G_\sigma$, denoted $R(G_\sigma)$, to be the set of colourings of $G_\sigma$ in which the edges $e_i'$ all have different colours. There is a faithful signature preserving action of $\mathcal{S}_{2p}$ on $R(G_\sigma)$ given by permuting the colours $c_1,\dots,c_{2p}$, so 
$$ (-1)^{\pi_{st}}\langle e_s, e_t\rangle =\frac{1}{{2p \choose p}} (-1)^p\sum_{\sigma\in\mathcal{S}_r}\sum_{C\in R(g_\sigma)}(-1)^C.$$ 
As in the proof of \Cref{base case} we may replace the sum over $R(G_\sigma)$ by one over $A(G_\sigma)$, thus $$ (-1)^{\pi_{st}}\langle e_s, e_t\rangle =\frac{1}{{2p \choose p}} (-1)^p\sum_{\sigma\in\mathcal{S}_r}\sum_{C\in A(G_\sigma)} (-1)^{\pi_{s_\sigma t_\sigma}}\langle e_{s_\sigma}, e_{t_\sigma}\rangle .$$ 
As the binomial coefficient ${2p \choose p}$ is not divisible by $p$ and as $p^3$ divides $\langle e_{s_\sigma}, e_{t_\sigma}\rangle,$ by \Cref{2^2p} we conclude that $p^3$ divides $\langle e_s, e_t\rangle.$ 
 
On the other hand, if there is an edge from $s_3$ to $t_3$, then there is a faithful and signature preserving action of $\mathcal{S}_p$ on the admissible colourings of $G$, by permuting the colours $c_1,\dots,c_p$. Summing the signatures of all admissible colourings of $G$ in which $e$ has colour $c_p$ we get $\frac{\langle e_s, e_t\rangle}{p}$, which is divisible by $p^2$ if and only if $\langle e_s, e_t\rangle$ is divisible by $p^3$. Deleting the edge $e$ gives the graph $G(s',t')$ for $ (3^{p-1},2^{p+1})$-tableaux $s'$ and $t'$. 
 There is a one-to-one correspondence between the admissible colourings of $G(s',t')$ and colourings of $G$ in which $e$ has colour $c_p$. 
 This correspondence is signature preserving and so the sum 
 of 
 $(-1)^C$ 
 over all admissible colourings $C$ of $G$ 
 in which $e$ has colour $c_p$ is $\langle e_{s'}, e_{t'}\rangle=\frac{\langle e_s, e_t\rangle}{p}$ which is divisible by $p^2$, by \Cref{Fayers}.
\end{proof} 

\begin{lemma}\label{321432}
Let $\lambda\vdash n$ and suppose there exist $i$ such that $\lambda_i\le\lambda_{i+3p-3}+2$ with $\lambda_{i+2p-2}\ge 2$. Suppose further that the $p$-singularity between rows $i$ and $i+3p-2$ has length $\lambda_j\neq\lambda_i-2$ and $\lambda_j\ge 3$. 
\end{lemma}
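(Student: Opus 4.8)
The statement to be proved is $\nu_p(\lambda)\ge 3$. My plan mirrors \Cref{333}, \Cref{2^2p} and \Cref{3^p2^p}: reduce, via the removal and superadditivity results, to a small explicit partition, and then extract divisibility of $\langle e_s,e_t\rangle$ by $p^3$ from the admissible-colouring calculus. Write $k=\lambda_j$ for the length of the distinguished $p$-singularity, so $k\ge 3$. The hypothesis $\lambda_j\neq\lambda_i-2$ will be used to ensure the $p$ equal rows do not lie at the foot of the width-two band of rows $i,\dots,i+3p-3$; equivalently, the band has the shape $((x+1)^a,x^b,(x-1)^c)$ with the singularity contributing $a\ge p$ or $b\ge p$, while $\lambda_j\ge 3$ keeps $x\ge 2$ after any column deletions.

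First I would use \Cref{(lambda*mu)} to pass to the band alone and \Cref{abc} to bound its Schaper number below by $\nu_p((x^{a+b+c}))-\nu_p(\binom{a+b+c}{a,b,c})-\nu_p(c!)$. The constraint $\lambda_{i+2p-2}\ge 2$ controls the multiplicities: when the shortest band-length is $1$ it forces $c<p$, so $\nu_p(c!)=0$, and in the balanced configurations the multinomial is prime to $p$, so both corrections vanish and it suffices to bound the rectangle. When the shortest length exceeds $1$ I would first apply \Cref{columnremoval} to strip columns down to a band $(3^a,2^b,1^c)$; here $c$ may be as large as $2p-2$, the factorial correction becomes $\nu_p(c!)=1$, and the multinomial may again be computed to be prime to $p$, so that \Cref{abc} leaves a rectangle bound that must clear $p^4$ rather than $p^3$.

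The remaining task is therefore a lower bound for the rectangle $\nu_p((x^N))$, with $N=a+b+c$ of size $3p-2$, of the shape $\nu_p((x^N))\ge\nu_p(N!)+(x-1)$ — exactly the pattern behind the chain $\nu_2((1^4))=3$, $\nu_2((2^4))\ge4$, $\nu_2((3^4))\ge5$, $\nu_2((4^4))\ge6$ recorded in the proof of \Cref{4422}. I would prove this by iterating \Cref{p edges} one column at a time: for row-equivalent $s,t$, either a counting argument as in \Cref{2^2p} forces at least $p$ edges between a pair of first-column vertices, so that \Cref{p edges} contributes the factor $p!$ on top of $p^{\nu_p(\hat\mu)}$ with $\nu_p(\hat\mu)$ estimated by \Cref{James}; or there are few such edges, in which case deleting the top vertices and replacing respectable by admissible colourings, as in \Cref{base case}, expresses $\langle e_s,e_t\rangle$ as a factor prime to $p$ times a sum of inner products for a strictly narrower rectangle, to which induction on the width applies.

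The main obstacle is precisely this rectangle estimate for odd $p$. The clean pigeonhole that yields ``at least $p$ edges'' in \Cref{2^2p} relied on there being only two columns; for width $x\ge3$ the edges distribute among more vertices and no such bound is automatic, so the few-edges branch must be iterated carefully and the accumulated factorial and multinomial factors tracked to confirm none is divisible by $p$. The sparse configurations of the second paragraph, where one of the three band-lengths occurs fewer than $p$ times, are the hardest: there the multinomial correction need not vanish, the required rectangle bound jumps to $p^4$, and one may have to abandon \Cref{abc} and run the edge dichotomy of \Cref{3^p2^p} directly on the band, checking in each configuration that the partition left after vertex- or edge-deletion meets a hypothesis of \Cref{Fayers}, \Cref{2^2p} or \Cref{3^p2^p}.
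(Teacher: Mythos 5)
Your reduction is the right one, and it is the paper's: pass to the band of rows via \Cref{(lambda*mu)} and \Cref{columnremoval}, feed the resulting partitions $(3^a,2^b,1^c)$ and $(4^a,3^b,2^c)$ with $a+b+c=3p-2$ into \Cref{abc}, and thereby reduce everything to a lower bound for the rectangles $(2^{3p-2})$ and $(3^{3p-2})$. The gap is that you then declare this rectangle bound to be ``the main obstacle'' and leave it open, on the grounds that the pigeonhole yielding at least $p$ parallel edges ``relied on there being only two columns'' and is not automatic for width $3$. In fact it is automatic for the one rectangle that matters: it has exactly $3p-2$ rows, and $3p-2=3(p-1)+1$, so the $3p-2$ edges incident on $s_1$ in $G(s,t)$ meet only the three vertices $t_1,t_2,t_3$ and at least $\lceil(3p-2)/3\rceil=p$ of them land on a common $t_j$; since all columns of a rectangle have equal length we may take $j=1$. \Cref{p edges} then gives $p!\cdot p^{\nu_p((2^{3p-2}))}\mid\langle e_s,e_t\rangle$, and since $\nu_p((2^{3p-2}))\ge 3$ (by \Cref{2^2p}, or by the same pigeonhole at width two combined with $\nu_p((1^{3p-2}))=2$ from \Cref{James}), this yields $\nu_p((3^{3p-2}))\ge 4$ in one line. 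That is precisely the Claim on which the paper's proof rests, and it is all that is needed.

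Consequently the machinery you hold in reserve for the ``sparse'' configurations --- the few-edges branch with vertex deletion and respectable colourings, and a general bound $\nu_p((x^N))\ge\nu_p(N!)+(x-1)$ for arbitrary $x$ --- is not required here: after \Cref{columnremoval} only widths $2$ and $3$ occur, and there the pigeonhole always succeeds, so there is no few-edges case. Your instinct that wider rectangles are genuinely harder is sound --- this is exactly why the configuration $(5^a,4^b,3^c)$ with $a,b<p$ is left open in the paper's closing Remark --- but that configuration is excluded from this lemma by the hypotheses $\lambda_j\neq\lambda_i-2$ and $\lambda_j\ge 3$, which force the singularity to sit in the top or middle block of the band and hence force $a\ge p$ or $b\ge p$ after column removal. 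As written, your argument does not establish $\nu_p(\lambda)\ge 3$, because the single computation everything reduces to is the step you have not supplied.
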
 
\begin{proof}
By \Cref{(lambda*mu),columnremoval} it suffices show the following partitions have Schaper number greater than or equal to three:
\begin{itemize}
    \item $(3^a,2^b,1^c)$ with $a\ge p$, $a+b\ge 2p-1$ and $a+b+c=3p-2$,
    \item $(4^a,3^b,2^c)$ with $a+b+c=3p-2$, and one of $a$ or $b\ge p$ 
\end{itemize}
which, by \Cref{abc} follows from: 
\begin{claim}
$\nu_p((3^{3p-2}))=4$.
\end{claim}
Let $s$ and $t$ be row equivalent $(3^{3p-2})$-tableaux. We may assume there are at least $p$ edges from $s_1$ to $t_1$, and as $\nu_p((2^{3p-2}))\ge 3$, we deduce $\nu_p((3^{3p-2}))=4$ by \Cref{p edges}, thus proving the claim.
\end{proof}

\begin{remark}
Observe that \Cref{(lambda*mu),Fayers,2^2p,3^p2^p,321432} show that all the conditions in \Cref{onlyif} are sufficient, except possibly the last, for which only the case $\lambda=(5^a,4^b,3^c)$ where $a+b+c=3p-2$ and $a,b<p$ remains. To prove the conjecture it only remains to show that such a partition also has $\nu_p(\lambda)\ge 3$.
\end{remark}

\section*{Aknowledgements}
This work will appear in the PhD thesis of the first author, who was supported by the Woolf Fisher Trust and the Cambridge Trust. We are grateful to Robert Spencer for his help computing the inner products of polytabloids in Sage.

\thispagestyle{footer}

\bibliographystyle{abbrv}

\end{document}